\theoremstyle{plain}
\renewcommand{\le}{\leqslant}
\renewcommand{\ge}{\geqslant}
\renewcommand{\leq}{\leqslant}
\renewcommand{\geq}{\geqslant}
\newcommand{\e}{\varepsilon}
\newtheorem{corollary}{Corollary}
\newtheorem{theorem}[corollary]{Theorem}
\newtheorem{lemma}[corollary]{Lemma}
\newtheorem{claim}[corollary]{Claim}
\newtheorem{proposition}[corollary]{Proposition}
\newtheorem{question}{Question}
\newtheorem{remark}{Remark}
\begin{document}


\title{Absolutely minimal Lipschitz extension \\ of tree-valued
mappings}\thanks{A.~N. is supported by NSF grants CCF-0635078 and
CCF-0832795, BSF grant 2006009, and the Packard Foundation. S.~S. is
supported by NSF grants DMS-0645585 and OISE-0730136.}


\author{Assaf Naor}
\address{Courant Institute\\ New York University}
\email{naor@cims.nyu.edu}

\author{Scott Sheffield}
\address{Massachusetts Institute of Technology}
\email{sheffield@math.mit.edu}

\date{}
\maketitle
\font \m=msbm10
\newcommand{\R}{{{\mathbb R}}}
\newcommand{\C}{{{\mathbb C}}}
\newcommand{\Z}{{ {\mathbb Z}}}
\newcommand{\N}{{\hbox {\m N}}}
\newcommand{\U}{{\hbox {\m U}}}
\newcommand{\HH}{{\hbox {\m H}}}
\renewcommand{\N}{\mathbb N}
\renewcommand{\epsilon}{\varepsilon}

\newcommand{\old}[1]{}
\newcommand{\thh}{\ensuremath{^{\text{th}}}\xspace}
\newcommand{\nd}{\ensuremath{^{\text{nd}}}\xspace}
\newcommand{\st}{\ensuremath{^{\text{st}}}\xspace}
\newcommand{\G}{{G}}
\newcommand{\E}{{\mathbb E}}
\newcommand{\Eng}{{\cal E}}
\newcommand{\M}{{\cal M}}
\newcommand{\T}{{\mathbb T}}
\newcommand{\supp}{{\operatorname{supp}}}
\newcommand{\sign}{{\operatorname{sign}}}
\newcommand{\Arg}{{\operatorname{Arg}}}
\newcommand{\Log}{{\operatorname{Log}}}
\newcommand{\Lip}{{\operatorname{Lip}}}
\newcommand{\diam}{{\operatorname{diam}}}
\newcommand{\A}{{\mathbb A}}
\newcommand{\I}{{\rm I}\xspace}
\newcommand{\II}{{\rm II}\xspace}
\newcommand{\favored}{{\mathfrak F}}
\newcommand{\eps}{\varepsilon}
\newcommand{\Cov}{{\rm Cov}}
\newcommand{\Var}{{\bf Var}}
\renewcommand{\Re}{\operatorname{Re}}
\renewcommand{\Im}{\operatorname{Im}}
\newcommand{\Xt}{X_{\text{term}}}
\renewcommand{\phi}{\varphi}
\newcommand{\increasing}{\mbox{$\star$-increasing}\xspace}
\newcommand{\decreasing}{\mbox{$\star$-decreasing}\xspace}
\renewcommand{\setminus}{\smallsetminus}
\renewcommand{\vec}{}
\newcommand{\eqdef}{\stackrel{\mathrm{def}}{=}}

\newarrow{Dotsto} ....>
\newarrow{Dashto} {}{dash}{}{dash}>
\newarrow {Backwards} <----

\newcommand{\vs}{\vspace{.15 in}}

\def \interior {{\hbox {int}}}
\def \eps {\varepsilon}
\def \Lip {\mathrm{Lip}}

\begin {abstract}
We prove that every Lipschitz function from a subset of a locally
compact length space to a  metric tree has a unique absolutely
minimal Lipschitz extension (AMLE).  We relate these extensions to a
stochastic game called {\bf Politics} --- a generalization of a game
called {\bf Tug of War} that has been used in~\cite{PSSW09} to study
real-valued AMLEs.
\end {abstract}


\section {Introduction}

For a pair of metric spaces $(X,d_X)$ and $(Z,d_Z)$, a mapping
$h:X\to Z$, and a subset $S\subseteq X$, the Lipschitz constant of
$h$ on $S$ is denoted
$$
\Lip_S(h) \eqdef \sup_{\substack{x,y \in S\\x\neq y}}
\frac{d_Z(h(x),h(y))}{d_X(x,y)}.$$ Given a closed subset $Y\subseteq
X$ and a Lipschitz mapping $f:Y\to Z$, a Lipschitz mapping
$\widetilde f:X\to Z$ is called an {\bf absolutely minimal Lipschitz
extension} (AMLE) of $f$ if its restriction to $Y$ coincides with
$f$, and for every open subset $U\subseteq X\setminus Y$ and every
Lipschitz mapping $h:X\to Z$ that coincides with $\widetilde f$ on
$X\setminus U$ we have \begin{equation}\label{eq:modification}
\Lip_U(h)\ge \Lip_{U}\left(\widetilde f\right).
\end{equation}
 In other words, $\widetilde f$ extends $f$, and it is not possible to modify
 $\widetilde f$ on an open set in a way that decreases the Lipschitz
constant on that set.

Our main result is:
\begin{theorem}\label{thm:tree main}
Let $X$ be a locally compact length space and let  $T$ be a metric tree. For every closed subset $Y\subseteq X$, every Lipschitz mapping $f:Y\to T$ has a unique AMLE $\widetilde f:X\to T$.
\end{theorem}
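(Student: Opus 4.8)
The plan is to treat existence and uniqueness separately, exploiting two structural features of a metric tree $T$. First, $T$ is an injective (hyperconvex) metric space: every $1$-Lipschitz map into $T$ from a subset of any metric space extends to a $1$-Lipschitz map on the whole space, without increasing the Lipschitz constant. Second, $T$ is uniquely geodesic and $0$-hyperbolic, so for any three points $a,b,c\in T$ there is a well-defined median (confluence) point $m(a,b,c)$ lying on all three geodesics; the median map and the nearest-point projections onto geodesic segments and subtrees are $1$-Lipschitz, and the distance $d_T$ is convex along geodesics (as $T$ is $\mathrm{CAT}(0)$). These facts are what let me pass back and forth between tree-valued and scalar statements.

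For existence I would first normalize so that $\Lip_Y(f)=1$. A Lipschitz extension $\widetilde f:X\to T$ with $\Lip_X(\widetilde f)=1$ exists immediately by injectivity of $T$, but it need not be absolutely minimal. To upgrade it I would set up the stochastic game \textbf{Politics} announced in the abstract: using local compactness and the length-space structure, discretize $X$ at scale $\eps$, run a Tug-of-War–type game whose terminal payoff is the tree-valued datum $f$, and define a candidate AMLE as the limiting game value as $\eps\to 0$. Because payoffs live in $T$ rather than in $\R$, the value must itself be tree-valued, and it is exactly the median structure of $T$ that makes this well-defined: at a branch point the competing parties negotiate which branch to pull toward, and $0$-hyperbolicity forces a consistent outcome. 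The technical core of existence is then showing that the limit exists — via the uniform Lipschitz bound, equicontinuity, and compactness furnished by the hypotheses — and that it satisfies the absolute-minimality condition~\eqref{eq:modification}.

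For uniqueness, suppose $\widetilde f_1,\widetilde f_2$ are two AMLEs of $f$ and set $g(x)=d_T\!\left(\widetilde f_1(x),\widetilde f_2(x)\right)$, a nonnegative scalar function vanishing on $Y$. The goal is a comparison principle forcing $g\equiv 0$. One tempting route is to compose with the $1$-Lipschitz projections $\pi$ onto geodesics of $T$ and argue that each $\pi\circ\widetilde f_i$ is a scalar AMLE of $\pi\circ f$, reducing everything to the real-valued theory of \cite{PSSW09}; but this fails in general, since composition with a $1$-Lipschitz map need not preserve absolute minimality. Instead I would use convexity of $d_T$ along geodesics together with the local minimality of each $\widetilde f_i$ to show that $g$ obeys comparison with cones from above, i.e. is a scalar infinity-subsolution on $X\setminus Y$; the maximum principle then forces $g$ to attain its maximum on $Y$, where it is $0$, whence $g\equiv 0$. (As a parallel route, uniqueness should also follow by showing any AMLE must coincide with the Politics value, since both players can guarantee that value up to $o(\eps)$.)

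The step I expect to be the main obstacle is precisely the behaviour at branch points of $T$: both making the tree-valued game value well-defined and independent of the discretization, and running the comparison argument for $g$, require controlling what happens when the optimal geodesics in $T$ split. In the scalar setting of \cite{PSSW09} the two players pull along a single axis, whereas in a tree the direction of optimal play is itself a choice among the branches meeting at a point. Showing that $0$-hyperbolicity renders these choices consistent — so that the value is canonical and the subsolution property of $g$ survives across branch points — is where the real work lies, with the length-space and local-compactness assumptions entering to supply the equicontinuity and limiting compactness needed to pass from the discrete game to the continuous extension.
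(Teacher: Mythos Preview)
Your plan identifies the right ingredients but contains a concrete gap in each half.

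\textbf{Uniqueness.} You propose to show that $g(x)=d_T\bigl(\widetilde f_1(x),\widetilde f_2(x)\bigr)$ satisfies comparison with cones from above and then invoke the maximum principle. The difficulty is that, even after your localization, $g$ is a \emph{sum} of two functions of the form $x\mapsto d_T(t,\widetilde f_i(x))$ (taking $t$ to be the midpoint of $\widetilde f_1(x_0)$ and $\widetilde f_2(x_0)$), and the sum of two $\infty$-subsolutions is in general \emph{not} an $\infty$-subsolution; the infinity Laplacian is fully nonlinear and has no superposition principle. Convexity of $d_T$ along geodesics does not repair this. The paper avoids exactly this trap: rather than working with $g$, it passes to a simply connected cover (Lemma~\ref{l.coveringreduction}) and then builds two \emph{separate} scalar functions $a_1,a_2$ via a path-dependent ``gluing'' of the geodesic $I\bigl(\widetilde f_1(x),\widetilde f_2(x)\bigr)$ onto $\R$, arranged so that $a_1$ is locally an affine function of some $d_T(t,\widetilde f_1(\cdot))$ and $a_2$ of some $d_T(t,\widetilde f_2(\cdot))$. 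Then $a_1$ is a supersolution and $a_2$ a subsolution \emph{individually}, and one compares each to a genuine real-valued AMLE before invoking the (standard) maximum principle for the difference of two real AMLEs. The branch-point issue you flag is handled precisely by this gluing construction, not by a direct subsolution property of $g$.

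\textbf{Existence.} Your description of Politics is off: as defined in the paper the payoffs are \emph{real} numbers (sums of $T$-distances), and the game value is the scalar $d_T\bigl(\widetilde f(x_0),t_0\bigr)$, not a point of $T$; the tree-valued extension is recovered only after one knows the value for \emph{every} initial target $t_0$. More importantly, the paper does not use Politics to prove existence at all. Instead it proves (Theorem~\ref{thm:aronszajn}) that any absolute $1$-Lipschitz retract admits $\infty$-harmonic extensions on finite graphs, shows (Theorem~\ref{thm:local-global}) that for tree targets $\infty$-harmonic implies AMLE on the graph, and then runs an Arzel\`a--Ascoli limit using the $T$-comparison characterization of AMLE (Proposition~\ref{p.treecomparison}) to pass from discrete to continuous. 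Your outline skips the mechanism that makes the limit an AMLE: one needs a pointwise, stable criterion (here, $T$-comparison) that survives uniform limits, and you have not supplied one.
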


Recall that a metric space $(X,d_X)$ is a \textbf{length space} if
for all $x,y\in X$, the distance $d_X(x,y)$ is the infimum of the
lengths of curves in $X$ that connect $x$ to $y$. By a
\textbf{metric tree} we mean the one-dimensional simplicial complex
associated to a finite graph-theoretical tree with arbitrary edge
lengths (i.e., a finite graph-theoretical tree whose edges are
present as actual intervals of arbitrary length, equipped with the
graphical shortest path metric). We did not investigate here the
greatest possible generality in which Theorem~\ref{thm:tree main}
holds true; in particular, we conjecture that the assumption that
$X$ is locally compact can be dropped, and that $T$ need not
correspond to a finite graph-theoretical tree, but rather can belong
to the more general class of bounded $\R$-trees
(see~\cite{MR753872,MR1379369}). The requirement that $X$ be locally
compact is not used in our proof of the uniqueness assertion of
Theorem~\ref{thm:tree main}.

In the special case when $T$ is an interval $[a,b]\subseteq \R$ and
$X=\R^n$, Theorem~\ref{thm:tree main} was proved in~\cite{Jen93};
see also~\cite{MR1876420,ACJ04,AS10} for different proofs of the
uniqueness part of Theorem~\ref{thm:tree main} in this special case.
The existence part of Theorem~\ref{thm:tree main} was generalized to
arbitrary length spaces $X$ and $T=[a,b]$ in~\cite{MR1719573}; see
also~\cite{Juu02} and~\cite{MR2346452} for different proofs of this
existence result with the additional assumptions that the length
space $X$ is separable or compact, respectively. The uniqueness part of Theorem~\ref{thm:tree main} was proved in~\cite{PSSW09} for
$X$ a general length space and $T=[a,b]$. Additionally, \cite{PSSW09} contains a new (game theoretic) proof of the existence part of Theorem~\ref{thm:tree main} when $T=[a,b]$ and $X$ is a general length space.

The purpose of the present article is to initiate the study of
absolutely minimal Lipschitz extensions of mappings that are not
necessarily real-valued, the tree-valued case being the first
non-trivial setting of this type where such theorems can be proved.
Our proofs overcome various difficulties that arise since we can no
longer use the order structure of the real line, which was crucially
used in~\cite{Jen93,MR1719573,Juu02,MR2346452,PSSW09}.  We also introduce
a stochastic game called {\bf Politics}, related to tree-valued AMLE,
that generalizes the stochastic game called {\bf Tug of War} that was introduced
and related to real-valued AMLE in \cite{PSSW09}.

In the remainder of this introduction we explain the relevant
background from the classical theory of Lipschitz extension and
$\infty$-harmonic functions, and also describe the main steps of our
proof.




\subsection{Background on the Lipschitz extension problem}

The classical Lipschitz extension problem asks for conditions on a
pair of metric spaces $(X,d_X)$ and $(Z,d_Z)$ which ensure that
there exists $K\in (0,\infty)$ such that for all $Y\subseteq X$ and
all Lipschitz mappings $f:Y\to Z$, there exists $\widetilde f:X\to
Z$ with $\left. \widetilde f\right|_Y=f$ and
\begin{equation}\label{eq:ext condition}
\Lip_X\left(\widetilde f\right)\le K\cdot \Lip_Y(f).
\end{equation}

Stated differently, in the Lipschitz extension problem we are interested in geometric conditions ensuring the existence of $\widetilde f:X\to Z$ such that the diagram in~\eqref{eq:diagram} commutes, where $\iota:Y\to X$ is the formal inclusion, and the Lipschitz constant of $\widetilde f$ is guaranteed to be at most a fixed multiple (depending only on the geometry of the spaces $X, Z$) of the Lipschitz constant of $f$.
\begin{equation}\label{eq:diagram}
\begin{diagram}
X & & \\
\dBackwards^\iota &  \rdDashto^{\widetilde f} & \\
Y& \rTo^{f} & Z
\end{diagram}
\end{equation}
Note that if $(Z,d_Z)$ is complete then we can trivially extend $f$ to the closure of $Y$.


When $K=1$ in~\eqref{eq:ext condition}, i.e., when one can always
extend functions while preserving their Lipschitz constant, the pair
$(X,Z)$ is said to have the {\bf isometric extension property}. When
$K\in (1,\infty)$ the corresponding extension property is called the
{\bf isomorphic extension property}. The present article is devoted
to the isometric extension problem, though we will briefly discuss
questions related to its isomorphic counterpart in
Section~\ref{sec:future}. We refer to the
books~\cite{MR0461107,BL00} and the references therein, as well as
the introductions of~\cite{LN05,NPSS06} (and the references
therein),  for more background on the Lipschitz extension problem.

It is rare for a pair of metric spaces $(X,Z)$ to have the isometric
extension property. A famous instance when this does happen is
Kirszbraun's extension theorem~\cite{Kirsz34}, which asserts that if
$X$ and $Z$ are Hilbert spaces then $(X,Z)$ have the isometric
extension property. Another famous example is the non-linear
Hahn-Banach theorem~\cite{McShane34}, i.e., when $Z=\R$ and $X$ is
arbitrary; this (easy) fact follows from the same proof as the proof
of the classical Hahn-Banach theorem (i.e., by extending to one
additional point at a time; alternatively, one can construct the maximal
and minimal isometric extensions explicitly).

More generally, one may consider  metric spaces $Z$ such that for
every metric space $X$ the pair $(X,Z)$ has the isometric extension
property (i.e., $Z$ is an injective metric space in the isometric
category). This is equivalent to the fact that there is a
$1$-Lipschitz retraction from any metric space containing $Z$ onto
$Z$ (see~\cite[Prop.~1.2]{BL00}); such spaces are called in the
literature {\bf absolute $1$-Lipschitz retracts}. It is a well known
fact (see~\cite[Prop.~1.4]{BL00}) that $(Z,d_Z)$ is an absolute
$1$-Lipschitz retract if and only if $(a)$ $Z$ is {\em metrically
convex}, i.e., for every $x,y\in Z$ and $\lambda\in [0,1]$ there is $z\in
Z$ such that $d_Z(x,z)=\lambda d_Z(x,y)$ and $d_Z(y,z)=(1-\lambda)d_Z(x,y)$, and
$(b)$ $Z$ has the {\em binary intersection property}, i.e., if every
collection of pairwise intersecting closed balls in $Z$ has a common
point. Examples of absolute $1$-Lipschitz retracts are $\ell_\infty$
and metric trees (see~\cite{KN81,JLPS02}). Additional examples are
contained in~\cite{Isb64} (see also~\cite[Ch.~1]{BL00}).

If $(X,d_X)$ is path-connected and the pair $(X,Z)$ has the
isometric extension property, then the AMLE
condition~\eqref{eq:modification} is equivalent to the requirement:
\begin{equation}\label{eq:def boundary}
\forall \ \mathrm{open}\ U\subseteq X\setminus Y,\quad \Lip_{U}\left(\widetilde f\right)=\Lip_{\partial U}\left(\widetilde f\right).
\end{equation}
When $Z=\R$ and $X=\R^n$, the AMLE formulation~\eqref{eq:def
boundary} was first introduced by Aronsson~\cite{Aro67}, in
connection with the theory of $\infty$-harmonic functions.
Specifically, it was shown in~\cite{Aro67} that if $\widetilde
f:\R^n\to \R$ is smooth then the validity of~\eqref{eq:def boundary}
is equivalent to the requirement that
\begin{equation}\label{eq"laplace}
\sum_{i=1}^n\sum_{j=1}^n\frac{\partial \widetilde f}{\partial x_i}
\cdot\frac{\partial \widetilde f}{\partial x_j}\cdot\frac{\partial^2
\widetilde f} {\partial x_i\partial x_j}=0\quad\mathrm{on}\
\R^n\setminus Y.
\end{equation}
If one interprets~\eqref{eq"laplace} in terms of viscosity
solutions, then it was proved in~\cite{Jen93} that the equivalence
of~\eqref{eq:def boundary} and~\eqref{eq"laplace} (when $Z=\R$ and
$X=\R^n$) holds for general Lipschitz $\widetilde f$. We refer to
the survey article~\cite{ACJ04} and the references therein for more
information on the many works that investigate this remarkable
connection between the classical Lipschitz extension problem and
PDEs.

Existence of isometric and isomorphic Lipschitz extensions has a
wide variety of applications in pure and applied mathematics.
Despite this rich theory, the issue raised by Aronsson's seminal
paper~\cite{Aro67} is that even when isometric Lipschitz extension
is possible, many such extensions usually exist, and it is therefore
natural to ask for extension theorems ensuring that the extended
function has additional desirable properties. In particular, the
notion of AMLE is an isometric Lipschitz extension which is locally
the ``best possible" extension. In this context, one can ask for
(appropriately defined) ``AMLE versions" of known Lipschitz
extension theories. As a first step, in light of
Theorem~\ref{thm:tree main} it is tempting to ask the following:

\begin{question}\label{q:retract}
Let $Z$ be an absolute $1$-Lipschitz retract. Is it true that for
every length space $X$ and every closed subset $Y\subseteq X$, any
Lipschitz $f:Y\to Z$ admits an AMLE $\widetilde f :X\to Z$?
\end{question}
Note that unlike the situation when $Z$ is a metric tree, in the
setting of Question~\ref{q:retract} one cannot expect in general
that the AMLE will be unique: consider for example
$Z=\ell_\infty^2$, i.e., the absolute $1$-Lipschitz retract $\R^2$,
equipped with the $\ell_\infty$ norm. Let $X=\R$ and $Y=\{0,1\}$.
The $1$-Lipschitz mapping $f:Y\to \ell_\infty^2$ given by
$f(0)=(0,0)$, $f(1)=(1,0)$ has many AMLEs $\widetilde f:\R\to
\ell_\infty^2$, since for every $1$-Lipschitz function $g:\R\to \R$
with $g(0)=0$, $g(1)=1$, the mapping $x\mapsto (x,g(x))$ will be an
AMLE of $f$. At the same time, by using the existence of real-valued
AMLEs coordinate-wise, the answer to Question~\ref{q:retract} is
trivially positive when $Z=\ell_\infty(\Gamma)$ for any set
$\Gamma$.

While we do not give a general answer to Question~\ref{q:retract}, we show
here that general absolute $1$-Lipschitz retracts $Z$ do enjoy a
stronger Lipschitz extension property: $Z$-valued functions defined
on subsets of vertices of $1$-dimensional simplicial complexes
associated to unweighted finite graphs admit isometric Lipschitz
extensions which are $\infty$-harmonic. This issue, together with
the relevant definitions, is discussed in Section~\ref{sec:infty}
below. In addition to being crucially used in our proof of
Theorem~\ref{thm:tree main}, this result indicates that absolute
$1$-Lipschitz retracts do admit enhanced Lipschitz extension
theorems that go beyond the simple existence of isometric Lipschitz
extensions (which is the definition of absolute $1$-Lipschitz
retracts). At the same time, we describe below a simple example
indicating inherent difficulties in obtaining a positive answer to
Question~\ref{q:retract} beyond the class of metric trees (and their
$\ell_\infty$-products).

\subsection{$\infty$-harmonic functions and AMLEs on finite
graphs}\label{sec:infty}

Let $G=(V,E)$ be a finite connected (unweighted) graph. We shall
consider $G$ as a $1$-dimensional simplicial complex, i.e., the
edges of $G$ are present as intervals of length $1$ joining their
endpoints. This makes $G$ into a length space, where the
shortest-path metric is denoted by $d_G$. Given a vertex $v\in V$
denote its neighborhood in $G$ by $N_G(v)$, i.e., $N_G(v)=\{u\in V:\
uv\in E\}$.


Let $(Z,d_Z)$ be a metric space.  We shall say that a function
$f:V\to Z$ is $\infty$-harmonic at $v\in V$ if there exist $u,w\in
N_G(v)$ such that \begin{equation}\label{eq:first infty}
d_Z(f(u),f(v))=d_Z(f(w),f(v))=\max_{z\in N_G(v)}
d_Z(f(z),f(v)),\end{equation} and
\begin{equation}\label{eq:second infty}
d_Z(f(u),f(w))=2\max_{z\in N_G(v)} d_Z(f(z),f(v)).
\end{equation}
$f:V\to Z$ is said to be $\infty$-harmonic on $W\subseteq V$ if it
is $\infty$-harmonic at every $v\in W$.

The connection to AMLEs is simple: for $\Omega\subseteq V$ and
$f:\Omega\to Z$, if $\widetilde f:G\to Z$ is an AMLE of $f$ then
$\widetilde f$ must be geodesic on edges, i.e., for $u,v\in V$ with
$uv\in E$, if $x\in G$ is a point on the edge $uv$ at distance
$\lambda\in [0,1]$ from $u$, then $d_Z(f(x),f(u))=\lambda
d_Z(f(u),f(v))$ and $d_Z(f(x),f(v))=(1-\lambda)d_Z(f(u),f(v))$
(apply~\eqref{eq:def boundary} to the open segment joining $u$ and
$v$). Moreover, if $G$ is triangle-free, then $\widetilde f$ is
$\infty$-harmonic on $V\setminus \Omega$. This follows from
considering in~\eqref{eq:def boundary} the open set $U\subseteq G$
consisting of the union of the half-open edges incident to $v\in
V\setminus \Omega$ (including $v$ itself). The vertices $u,w\in
N_G(v)=\partial U$ in~\eqref{eq:first infty} will be the points at
which $\Lip_{\partial U}\left(\widetilde f\right)$ is attained. The
restriction that $G$ is triangle-free implies that $d_G(u,w)=2$,
using which~\eqref{eq:second infty} follows from~\eqref{eq:def
boundary}.\footnote{In the above reasoning
the assumption that $G$ is triangle-free can be dropped if $Z$ is a
metric tree. But, this is not important for us:
 we only care about $G$ as a length space, and therefore we can replace each edge of $G$ by a path of length $2$,
resulting in a triangle-free graph whose associated $1$-dimensional
simplicial complex is the same as the original simplicial complex,
with distances scaled by a factor of $2$.}

The converse to the above discussion is true for mappings into
metric trees. This is contained in Theorem~\ref{thm:local-global}
below, whose simple proof appears in Section~\ref{s.infharmAMLE}. A
local-global statement analogous to Theorem~\ref{thm:local-global}
{\em fails} when the target (geodesic) metric space is not a metric
tree, as we explain in Remark~\ref{rem:flux} below.

Given a metric tree $T$, a finite graph $G=(V,E)$ and a function
$f:V\to T$, the {\em linear interpolation} of $f$ is the $T$-valued
function defined on the $1$-dimensional simplicial complex
associated to $G$ as follows: given an edge $e=uv\in E$ and $x\in e$
with $d_G(x,u)=\lambda d_G(u,v)$ and $d_G(x,v)=(1-\lambda)d_G(u,v)$,
the image $f(x)\in T$ is the point on the geodesic joining
$f(u)$ and $f(v)$ in $T$ with $d_T(f(x),f(u))=\lambda d_T(f(u),f(v))$ and
$d_T(f(x),f(v))=(1-\lambda)d_T(f(u),f(v))$.

\begin{theorem}\label{thm:local-global}
Let $T$ be a metric tree and $G=(V,E)$ a finite connected
(unweighted) graph. Assume that $\Omega\subseteq V$ and that $f:V\to
T$ is $\infty$-harmonic on $V\setminus \Omega$. Then the linear
interpolation of $f$ is an AMLE of $f|_{\Omega}$.
\end{theorem}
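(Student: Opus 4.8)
The plan is to prove that the linear interpolation $\widetilde{f}$ of $f$ satisfies the AMLE condition in the form~\eqref{eq:def boundary}, namely that for every open $U \subseteq G \setminus \Omega$ we have $\Lip_U(\widetilde f) = \Lip_{\partial U}(\widetilde f)$. Since $T$ is an absolute $1$-Lipschitz retract and $G$ is path-connected, this equivalent formulation is available to us. The inequality $\Lip_U(\widetilde f) \geq \Lip_{\partial U}(\widetilde f)$ is immediate from $\partial U \subseteq \overline{U}$ and continuity, so the content is the reverse inequality: the Lipschitz constant of $\widetilde f$ on the interior cannot exceed its value on the boundary.

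First I would reduce the problem to a statement about the vertex values. The key structural fact is that a function on a metric tree which is geodesic on each edge (as linear interpolation is, by construction) achieves its Lipschitz constant on pairs of vertices, or more precisely the Lipschitz constant over any connected region is controlled by the edge-length ratios $d_T(f(u),f(v))/d_G(u,v)$ over edges $uv$ meeting that region. So I would first argue that $\Lip_U(\widetilde f)$ equals $\max\{ d_T(f(u),f(v)) : uv \in E, \text{ edge } uv \text{ meets } U\}$ (recalling edges have length $1$), and similarly express $\Lip_{\partial U}(\widetilde f)$. This uses the tree structure of $T$ crucially: because geodesics in a tree are unique and distances are additive along the unique path, a ``long-range'' Lipschitz ratio between two points in a tree can always be witnessed by looking at a consecutive pair along a path in $G$ realizing the distance, so the global Lipschitz constant is attained locally, across a single edge.

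The heart of the argument is then an $\emph{averaging/monotonicity}$ step at a single vertex. Let $L = \Lip_{\partial U}(\widetilde f)$ and suppose toward a contradiction that some edge $uv$ meeting $U$ has $d_T(f(u),f(v)) > L$. I would track a vertex $v_0 \in U$ where the maximal edge-slope is attained and use $\infty$-harmonicity at $v_0$: by~\eqref{eq:first infty} and~\eqref{eq:second infty} there exist neighbors $u,w$ with $d_T(f(u),f(v_0)) = d_T(f(w),f(v_0)) = \max_{z \in N_G(v_0)} d_T(f(z),f(v_0))$ and $d_T(f(u),f(w)) = 2\max_z d_T(f(z),f(v_0))$. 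The last identity, combined with the tree metric, forces $f(v_0)$ to lie on the geodesic between $f(u)$ and $f(w)$ at its midpoint in the relevant sense, so that the ``slope'' cannot strictly decrease as one moves from $v_0$ to the extremal neighbors $u$ or $w$. Iterating this propagation of the maximal slope along a path of vertices inside the open set $U$, I would show the maximum must eventually propagate to $\partial U$, yielding $\max\text{-slope on } U \leq L$, a contradiction.

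\textbf{The hard part} I expect is making the propagation argument fully rigorous: one must ensure that the chain of neighbors realizing the maximal slope actually exits $U$ through $\partial U$ rather than looping, and that the tree identity~\eqref{eq:second infty} genuinely transfers the extremal slope across each step without loss. The tree hypothesis is what makes this work — equation~\eqref{eq:second infty} says $f(u), f(v_0), f(w)$ are colinear along a geodesic in $T$, so $v_0$ is a midpoint and the maximal slope is exactly preserved on both outgoing edges; in a general geodesic space the midpoint could ``bend,'' which is precisely the failure alluded to in Remark~\ref{rem:flux}. I would use finiteness of $G$ together with a maximum-principle phrasing (tracking the finite set of vertices where the global maximal slope is attained, and showing $\infty$-harmonicity forces this set to touch $\partial U$) to close the argument cleanly rather than relying on an infinite iteration.
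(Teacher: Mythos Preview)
Your core instinct --- use $\infty$-harmonicity together with the tree structure to push an extremal Lipschitz configuration outward toward $\partial U$ --- matches the paper's, but the execution has two genuine gaps. First, the reduction ``$\Lip_U(\widetilde f)$ equals the maximal edge-slope over edges meeting $U$'' is false when $U$ is disconnected: take $G$ a path with high-slope middle edges and low-slope end edges, and let $U$ be two small arcs on the end edges; then every edge meeting $U$ has small slope, yet points in the two components of $U$ realize a large Lipschitz ratio via the shortest $G$-path through the (external) middle edges. Second, even granting that reduction, propagating the maximal slope from a single vertex $v_0$ out to \emph{one} point of $\partial U$ says nothing about $\Lip_{\partial U}$; you would need to propagate in both directions $u,w$ simultaneously and argue that the $d_T$-distances add up along the combined path so as to produce a \emph{pair} of boundary points with ratio exceeding $L$. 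Your ``maximum-principle phrasing'' at the end does not address this two-endpoint issue.

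The paper collapses the whole iteration into one step with a device you are missing. Among pairs $(x,y)\in\overline U\times\overline U$ realizing $L=\Lip_{\overline U}(\widetilde f)$, choose one with $d_G(x,y)$ \emph{maximal} (compactness of $\overline U$). If $x\in U$, slide along an edge if necessary to assume $x\in V$; then $\infty$-harmonicity at $x$ yields neighbors $u,w$ with $f(x)$ the $T$-midpoint of $f(u),f(w)$ at distance $L$ from each. Because $T$ is a tree, for a point $z\in U$ on one of the edges $xu,xw$ at distance $\varepsilon$ from $x$ one gets $d_T(f(z),f(y))=d_T(f(x),f(y))+L\varepsilon$ while $d_G(z,y)\le d_G(x,y)+\varepsilon$; thus $(z,y)$ has ratio at least $L$, hence exactly $L$, with strictly larger $d_G$-distance --- contradicting maximality. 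The ``maximal $d_G$-distance among extremal pairs'' choice is precisely what absorbs your looping/termination worry and the need to track two endpoints separately.
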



\begin{remark}\label{rem:flux} {\em Consider the example depicted in Figure~\ref{fig:flux},
viewed as a $12$ vertex graph $G$ with vertices
$$
V=\{A,B,C,X,Y,Z\}\cup\{S_i\}_{i=1}^6 $$ and edges
$$E=\{XS_3,S_3A,AS_2,S_2B,AS_4,S_4C,BS_6,S_6C,ZS_5,S_5C,YS_1,S_1B\}.$$
(The role of the vertices $\{S_i\}_{i=1}^6$ is just to subdivide
edges so that the graph will be triangle-free.) The picture in
Figure~\ref{fig:flux} can also be viewed as a mapping $f:V\to \R^2$.
Denoting $\Omega=\{X,Y,Z\}$, this mapping is by construction
$\infty$-harmonic on $V\setminus \Omega$. In spite of this fact, the
linear interpolation of $f$ is not an AMLE of $f|_\Omega$. Indeed,
consider the open set $U=G\setminus \Omega$. Since the planar
Euclidean distance between any two of the points $f(X),f(Y),f(Z)$ is
strictly less than $3$ ($=$the distance between any two of the vertices
$\{X,Y,Z\}$ in $G$), we have $\Lip_{\partial
U}(f)=\Lip_{\{X,Y,Z\}}(f)<1$. At the same time, by considering the
vertices $A,B,C$ we see that $\Lip_U(f)=1$.}
\begin{figure}[h]\label{fig:flux}
\begin{center}\includegraphics[scale=0.417]{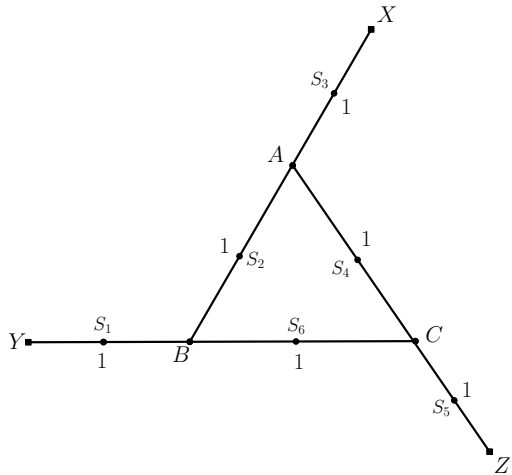}
\end{center}
 \caption{{{ An example of an $\infty$-harmonic function which isn't an AMLE.}}} \label{fig:hull}
\end{figure}
\end{remark}

In Section~\ref{s.infharmAMLE} we show that absolute $1$-Lipschitz
retracts have a stronger Lipschitz extension property, namely they
admit $\infty$-harmonic extensions for functions from finite graphs:

\begin{theorem}\label{thm:aronszajn}
Assume that $(Z,d_Z)$ is an absolute $1$-Lipschitz retract and that
$G=(V,E)$ is a finite connected (unweighted) graph. Fix
$\Omega\subseteq V$ and $f:\Omega\to Z$. Then there exists a mapping
$\widetilde f:V\to Z$ which is $\infty$-harmonic on $V\setminus
\Omega$ such that
$$
\left.\widetilde f\right|_\Omega=f\quad\mathrm{and}\quad
\Lip_V\left(\widetilde f\right)=\Lip_\Omega(f).
$$
\end{theorem}

The existence part of Theorem~\ref{thm:tree main} is deduced in
Section~\ref{s.existence} from Theorem~\ref{thm:aronszajn} via a
 compactness argument that relies on a
comparison-based characterization of AMLE that we establish in
Section \ref{s.treecomparison}. The uniqueness part of
Theorem~\ref{thm:tree main} is proved via a topological argument
(and the results of Section~\ref{s.treecomparison}) in
Section~\ref{s.uniqueness}.

\subsection{Tug of War and Politics}

In the special case when $T\subseteq \R$ is an interval,
Theorem~\ref{thm:tree main} was proved in \cite{PSSW09} without the
local compactness assumption using a two-player, zero-sum stochastic
game called Tug of War.  We expect that one could adapt the
arguments in \cite{PSSW09} and the game called Politics (introduced
below) to give a proof of Theorem~\ref{thm:tree main} that does not
use local compactness; however, this would involve rewriting large
sections of \cite{PSSW09} in a significantly more complicated way,
and we will not attempt to do this here.

 {\bf Tug of War} is a two-player, zero-sum
stochastic game. In this game, one starts with an initial point $x_0
\in X \setminus Y$; then at the $k$th stage of the game, a fair coin
is tossed and the winner gets to choose any $x_k \in X$ with $|x_k -
x_{k-1}| < \epsilon$.  Informally, the winning player ``tugs'' the
game position up to $\epsilon$ units in a direction of her choice.
The game ends the first time $K$ that $x_K \in Y$, and player one
collects a payoff of $f(x_K)$ from player two.  It was shown that as
$\epsilon \to 0$, the value of the game (informally, the amount the
first player wins in expectation when both players play optimally;
see Section \ref{politicalsection}) tends to $\widetilde f(x_0)$.
In addition to its usefulness in proofs, the game theory provides a
deeper understanding of what an AMLE {\em is}.  Although AMLEs are
often difficult to compute explicitly, one can always provide upper
and lower bounds by giving explicit strategies for the game and
showing that they guarantee a certain expected payoff for one player
or the other.  It is therefore natural to ask for an analog of Tug
of War that makes sense when $T$ is not an interval.

Since $\widetilde f(x_0)$ is a point in $T$, however, and not in
$\R$, it is not immediately obvious how $\widetilde f(x_0)$ can
represent a value for either player.  We will solve this problem by
augmenting the state space of the game to include declared
``targets'' $t_k, o_k \in T$ as well as ``game positions'' $x_k \in
X$.   Before explaining this, we remark that one obtains a slight
generalization of Tug of War by letting $x_k$ be vertices of any
(possibly infinite) graph with vertex set $X$ and $Y \subseteq X$.
One then requires that $x_k$ and $x_{k-1}$ be adjacent in that graph
(instead of requiring $|x_k-x_{k-1}| < \epsilon$).  We now introduce
the game of {\bf Politics} in a similar setting.

Let $G=(V,E)$ be an unweighted undirected graph which may have self
loops. Fix $Y\subseteq  V$ and a mapping $f:Y\to T$. Begin with an
initial game position $x_0 \in V \setminus Y$ and an initial ``target''
$t_0 \in T$.  At the $k$th round of the game, the players determine
the values $(x_k, t_k)$ as follows:
\begin{enumerate} \item Player \I chooses an ``opposition target'' $o_k \in T$ and collects
$d_T(o_k,t_{k-1})$ units from player \II.
\item Player \II chooses a new target $t_k \in T$ and collects $d_T(o_k,t_k)$ units from player \I.
\item A fair coin is tossed and the winner of the toss chooses a new game
position $x_k\in X$ with $\{x_{k-1},x_k\} \in E$.
\end{enumerate}
The total amount player \I gains at each round is $d_T(o_k,t_{k-1})
- d_T(o_k, t_k)$. Similarly, player \II gains $d_T(o_k,t_k) -
d_T(o_k, t_{k-1})$ at each round. The game ends after round $K$,
where $K$ is the smallest value of $k$ for which $x_k \in Y$.  At
this point player \I collects an additional $d_T(f(x_k), t_k)$ units
from player \II.  (If the game never ends, we declare the total
payout for each player to be zero.)

The game is called ``Politics'' because we may view it as a model
for a rather cynical zero-sum political struggle in which $f(x_K)$
represents a ``political outcome,'' but both parties care only about
their own perceived political strength, and not about the actual
outcome.  We think of the target as representing the ``declared
political objective'' of player \II; the terminal payoff rule, makes
it clear that player \II would prefer $f(x_K)$ be close to this
declared target (in order to ``appear successful'').  Player \II is
allowed to adjust the target during each round, but loses points for
moving her target closer to the declared opposition target $o_k$
(because ``making a concession'' makes her appear weak) and gains
points for moving her target further from the opposition target
because ``taking a harder line'' makes her appear strong).
\footnote{There is a more player-symmetric variant of this game in
which each player, upon moving a target, earns the net change in the
distance from the opponent's target. That is, player \II earns
$d_T(o_k, t_k) - d_T(o_k,t_{k-1})$ when choosing $t_k$ (so player \I
earns $d_T(o_k,t_{k-1})-d_T(o_k, t_k)$) and player \I earns
$d_T(o_k,t_{k-1}) - d_T(o_{k-1}, t_{k-1})$ when choosing $o_k$. In
fact, by combining like terms, modifying the end-of-game payout
function, and defining $o_0 = t_0$, one can make this game {\em
equivalent} to the one described above but with twice the total
payout.}

We will prove the following for finite graphs:
\begin{proposition} \label{towvalue}
Fix a finite graph $G=(V, E)$, some $Y\subseteq V$, a
 metric tree $T$, and a function $f:Y \to T$.  View
$G$ as a length space (with all edges having length one) and let
$\widetilde f:G \to T$ be the AMLE of $f$.  Then the value of the
game of Politics with these parameters and initial vertex $x_0\in
V\setminus Y$ is given by
$$d_T\left(\widetilde f(x_0), t_0\right).$$
\end{proposition}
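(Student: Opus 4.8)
The plan is to identify the value function explicitly as $\Phi(x,t)\eqdef d_T(\widetilde f(x),t)$ and to verify that it satisfies the dynamic programming principle (the one-round saddle-point identity) for Politics, after which the Proposition follows from a martingale/optional-stopping argument. The candidate is consistent with the terminal rule: if $x\in Y$ then $\widetilde f(x)=f(x)$, so $\Phi(x,t)=d_T(f(x),t)$ equals the terminal payoff. The substance is therefore to show that for an interior vertex $x\in V\setminus Y$ one round of play transforms the value correctly, i.e. that the one-round value of the game with continuation payoff $\Phi$ equals $\Phi(x,t)$.

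Fix an interior vertex $x$, write $a=\widetilde f(x)$ and $b_z=\widetilde f(z)$ for $z\in N_G(x)$. Since $\widetilde f$ is an AMLE on $G$ (viewed as a length space), the discussion following \eqref{eq:second infty} shows $\widetilde f$ is $\infty$-harmonic at $x$: there are $u,w\in N_G(x)$ with $d_T(b_u,a)=d_T(b_w,a)=S\eqdef\max_{z\in N_G(x)} d_T(b_z,a)$ and $d_T(b_u,b_w)=2S$, so that in the tree $T$ the point $a$ is the midpoint of the geodesic $[b_u,b_w]$. Setting $\Psi(t')\eqdef\tfrac12\max_{z}d_T(b_z,t')+\tfrac12\min_{z}d_T(b_z,t')$, and taking expectations over the fair coin (Player \I maximizing $d_T(\widetilde f(\cdot),t')$ over neighbours when she wins the toss, Player \II minimizing it when he wins), the one-round value equals $\max_{o}\min_{t'}F(o,t')$ with
$$F(o,t')=d_T(o,t)-d_T(o,t')+\Psi(t'),$$
and I must show this equals $d_T(a,t)$. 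For the lower bound, Player \I chooses the opposition target $o=a$: projecting $t'$ onto $[b_u,b_w]$ and using that every $b_z$ lies within distance $S$ of $a$, one checks $\Psi(t')\ge d_T(a,t')$ (splitting into the cases $d_T(a,t')\le S$, where $\max_z d_T(b_z,t')\ge S+d_T(a,t')\ge 2d_T(a,t')$ already suffices, and $d_T(a,t')> S$, where in addition $\min_z d_T(b_z,t')\ge d_T(a,t')-S$), whence $F(a,t')\ge d_T(a,t)$ for every $t'$. For the upper bound, given any $o$ Player \II answers with whichever of $b_u,b_w$ lies on the far side of $a$ from $o$, so that $a\in[o,t']$; since the farthest neighbour from an endpoint of $[b_u,b_w]$ is the opposite endpoint (at distance $2S$) while the nearest is the endpoint itself, $\Psi(b_u)=\Psi(b_w)=S$, and collinearity gives $F(o,t')=d_T(o,t)-d_T(o,a)\le d_T(a,t)$. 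These two inequalities yield the saddle identity; this geometric step, where the tree structure and $\infty$-harmonicity \eqref{eq:first infty}--\eqref{eq:second infty} are used, is the heart of the argument.

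Now define $M_k \eqdef \sum_{j=1}^k \left[d_T(o_j,t_{j-1})-d_T(o_j,t_j)\right] + \Phi(x_k,t_k)$, so that $M_0 = d_T(\widetilde f(x_0),t_0)$ and $M_K$ equals Player \I's total payoff on $\{K<\infty\}$. The saddle identity shows that if Player \I always plays $o_k=\widetilde f(x_{k-1})$ and moves greedily for $\Phi$ when she wins a toss, then $M_k$ is a submartingale regardless of \II, while if Player \II always plays the opposite-endpoint target above and moves greedily when he wins, then $M_k$ is a supermartingale regardless of \I; the increments are bounded by $2\diam(T)$. Optional stopping applied to $M_{K\wedge n}$ then produces the two bounds on the value. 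The upper bound is clean, because on $\{K=\infty\}$ the payoff is declared to be $0\le M_0=\Phi(x_0,t_0)$, so Player \II need not force the game to end. The lower bound is the main obstacle: Player \I must ensure $K<\infty$, since non-termination pays $0$, yet her termination-forcing moves conflict with greedy play. I would resolve this by the standard device of superimposing on \I's greedy strategy an arbitrarily small bias toward $Y$ (pulling along a shortest path to $Y$ on a sparse set of won tosses), which forces termination with a geometric tail --- from any vertex, $Y$ is reached within $\diam(G)$ consecutive won tosses, an event of probability at least $2^{-\diam(G)}$ --- while keeping the cumulative perturbation of the submartingale inequality controlled; letting the bias tend to $0$ recovers $\mathbb{E}[\text{payoff}]\ge \Phi(x_0,t_0)$, exactly as in the Tug of War analysis of \cite{PSSW09}. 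Combining the two bounds identifies the value as $d_T(\widetilde f(x_0),t_0)$.
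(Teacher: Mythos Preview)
Your saddle-point computation is correct and is essentially the same engine as the paper's: your Player~\II strategy (pick $t_k$ to be the endpoint of $[\widetilde f(y_{k-1}),\widetilde f(z_{k-1})]$ farthest from $o_k$, then move there on a won toss) coincides with the paper's, and your Player~\I choice $o_k=\widetilde f(x_{k-1})$ is a clean variant of the paper's choice $o_k\in\{\widetilde f(y_{k-1}),\widetilde f(z_{k-1})\}$ that also yields the submartingale inequality. The inequality $\Psi(t')\ge d_T(a,t')$ is verified correctly in both regimes.

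The genuine gap is in termination. First, the ``upper bound is clean'' claim misreads the paper's convention: $\mathcal F$ is declared to be a fixed constant $C$ whenever the game fails to terminate with probability one, so Player~\II must also supply a strategy that forces $K<\infty$ a.s.\ before optional stopping can be invoked. Second, and more seriously, your ``sparse bias toward $Y$'' does not force termination for Player~\I. If on a sparse set of won tosses \I steps toward $Y$, Player~\II can undo each such step on his next won toss; there is no reason the game position drifts to $Y$. If instead you reserve sparse \emph{blocks} of $\diam(G)$ consecutive rounds and move toward $Y$ throughout a block, termination does acquire a geometric tail, but the expected number of deviation rounds is then $\diam(G)\cdot 2^{\diam(G)}$ independently of the block spacing, and each deviation costs up to $S$ in the submartingale inequality; letting the spacing go to infinity does not make this loss vanish, so you recover only an approximate lower bound.

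The paper handles termination differently. Under mutual greedy play it observes that the step sizes $d_T(\widetilde f(x_{k-1}),\widetilde f(x_k))$ are nondecreasing, from which it deduces exponentially fast termination. When the opponent departs from greedy play, the paper tracks the cumulative amount the opponent has ``given up'' relative to the sub/supermartingale; once this slack exceeds $2\,\diam(T)$, the player switches to a forcing mode (fixing a target and always moving the game position toward it on won tosses), and the accumulated slack covers the cost of this suboptimal endgame. This bookkeeping, rather than an $\epsilon$-bias, is what closes the argument.
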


Proposition \ref{towvalue} will be proved in Section \ref{politicalsection}.  An extension of Proposition \ref{towvalue} to infinite graphs (via the methods of \cite{PSSW09}) is probably possible, but we will not attempt it here.


\subsection{Some open questions and directions for future research}\label{sec:future} It would be of interest to understand known isometric extension theorems in the context of the AMLE problem. Specifically, we ask:
\begin{question}\label{q:kirszbraun}
Is there an AMLE version of Kirszbraun's extension theorem, i.e, is it true that for
every pair of Hilbert spaces $H_1,H_2$ and every closed subset $Y\subseteq H_1$, any
Lipschitz mapping $f:Y\to H_2$ admits an AMLE $\widetilde f :H_1\to H_2$?
\end{question}
We refer to the manuscript~\cite{SS10} for a discussion of subtleties related to Question~\ref{q:kirszbraun}, as well as some partial results in this direction. Examples of additional isometric extension theorems that might have AMLE versions are contained in~\cite{Val44,Val45,MR0461107,LS97,Nao01}.

 The study of isomorphic extensions in the context of the AMLE problem is wide open. Since when Lipschitz extension is possible a constant factor loss is usually necessary, and since isomorphic extensions suffice for many applications, it would be of interest if some isomorphic extension theorems had ``almost locally optimal" counterparts. For example, one might ask for the existence of a constant $K>0$ such that one can extend any mapping $f:Y\to Z$ to a mapping $\widetilde f : X\to Z$ so that for every open $U\subseteq X\setminus Y$ we have
 \begin{equation}\label{eq:isomorphic formulation}
 \Lip_U\left(\widetilde f\right)\le K\cdot \Lip_{\partial U}\left(\widetilde f\right).
 \end{equation}
 Examples of isomorphic extension results that could be studied in the context of the AMLE problem include~\cite{Lin64,MP84,JL84,JLS86,Ball92,PY95,LPS00,BS02,LN05,NPSS06,MR2340707,MN06,Kal07,MR2200122}. Unlike isometric extension theorems, isomorphic extension theorems cannot be done ``one point at time", since na\"\i vely the constant factor losses at each step would accumulate. For this reason, isomorphic extension theorems usually require methods that are very different from their isometric counterparts. One would therefore expect that entirely new approaches are necessary in order to prove AMLE versions of isomorphic extension.


\subsection{Possible applications} The image processing literature makes use of real-valued AMLEs as a technique for image inpainting and surface reconstruction --- see~\cite{CMS98,ACGR02,hm-lips,CHSV08}. Since many data sets in areas ranging from computer science to biology have a natural tree structure, it stands to reason that problems involving reconstruction/interpolation of missing tree-valued data could be similarly approached using tree valued AMLEs.

Tree-valued AMLEs may also be useful for problems that do not
involve trees {\em a priori}. To give a simple illustration of this,
suppose we have a two-dimensional surface $S$ embedded in $\R^3$
that separates an ``inside" from an ``outside," but such that on
some open $W \subseteq \R^3$ the shape of the surface is not known.
Let $d(x)$ be the signed distance of $x$ from $S$ (i.e., the actual
distance if $x$ is on the outside and minus that distance if $x$ is
on the inside).  If we can compute or approximate $d(x)$ outside of
of $W$, then the extension of $d(x)$ to $W$ has a zero set that can
be interpreted as a ``reconstructed'' approximation to $S$.  This
approach and related methods are explored in \cite{CHSV08}.

If instead of a single ``inside'' and ``outside'' there were three
or more regions of space meeting at a point $v$, and the union $S$
of the interfaces between these regions was unknown in a
neighborhood $W$ of $v$, then we could use the same approach but
replace $\R$ with the metric tree $\bigcup \omega_i [0,\infty)
\subseteq \C$ for some complex roots of unity $\omega_i$, and let
$d(x)$ be $\omega_i$ (when $x$ is in the $i$th region) times the
distance from $x$ to $S$.  A similar technique could be used for
inpainting a two-dimensional image comprised of a small number of
monochromatic regions. Indeed, for such problems, it is not clear
how one could apply the AMLE method without using trees.




\section{Comparison formulation of absolute minimality} \label{s.treecomparison}

We take the following definition from \cite{MR2341302} (see
also~\cite{Jen93,CEG01} for the case $X=\R^n$). Let $U$ be an open
subset of a length-space $(X,d_X)$ and let $f: \overline U\to\R$ be
continuous. Then $f$ is said to satisfy \textbf{comparison with
distance functions from above} on $U$ if for every open $W\subseteq
U$,  $z\in X\setminus W$, $b\ge 0$ and $c\in \R$ we have the
following:
\begin{equation} \label{e.distancecomparison}\Big( \forall\ x\in \partial W\ \ f(x)\le
b\,d_X(x,z)+c\Big) \implies \Big(\forall\ x\in W\ \  f(x)\le
b\,d_X(x,z)+c\Big).\end{equation} The function $f$ is said to
satisfy \textbf{comparison with distance functions from below} on
$U$ if the function $-f$ satisfies comparison with distance
functions from above on $U$, i.e., for every open $W\subseteq U$,
$z\in X\setminus W$, $b\ge 0$ and $c\in \R$ we have the following:
\begin{equation} \label{e.distancecomparison-below}\Big( \forall\ x\in \partial W\ \ f(x)\ge
-b\,d_X(x,z)+c\Big) \implies \Big(\forall\ x\in W\ \  f(x)\ge
-b\,d_X(x,z)+c\Big).\end{equation} Finally, $f$ satisfies
\textbf{comparison with distance functions} on $U$ if it satisfies
comparison with distance functions from above and from below on $U$.
We cite the following:
\begin{proposition}[\cite{MR2341302}] \label{AMiffCDF}
Let $U$ be an open subset of a length space.
  A continuous $f:\overline U\to \R$ satisfies comparison with distance functions on
  $U$ if and only if it is an AMLE of $f|_{\partial U}$.
\end{proposition}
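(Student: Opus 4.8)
The plan is to reduce the statement to the ``tightness'' reformulation of absolute minimality and then prove the two implications separately. Since $\R$ is an absolute $1$-Lipschitz retract (the non-linear Hahn--Banach theorem) and a length space is path-connected, the equivalence explained in the discussion preceding \eqref{eq:def boundary} applies here with $Y=\partial U$: a continuous $f\colon\overline U\to\R$ is an AMLE of $f|_{\partial U}$ if and only if it is \emph{tight}, meaning
\[
\Lip_W(f)=\Lip_{\partial W}(f)\qquad\text{for every open }W\subseteq U .
\]
Because $f$ is continuous we always have $\Lip_{\partial W}(f)\le\Lip_{\overline W}(f)=\Lip_W(f)$, so tightness amounts to the single inequality $\Lip_W(f)\le\Lip_{\partial W}(f)$. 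It therefore suffices to prove that tightness is equivalent to comparison with distance functions on $U$.

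For the implication \emph{tightness $\Rightarrow$ comparison} I would establish comparison from above, the from-below case following by applying the result to $-f$. Fix an open $W\subseteq U$, a point $z\in X\setminus W$, $b\ge 0$ and $c\in\R$ with $f\le b\,d_X(\cdot,z)+c$ on $\partial W$, and suppose toward a contradiction that the open set $W'=\{x\in W:\ f(x)>b\,d_X(x,z)+c\}$ is nonempty. A short continuity argument (using $W'\subseteq W\subseteq U$ so that $\overline{W'}\subseteq\overline W$) shows $f=b\,d_X(\cdot,z)+c$ on $\partial W'$, whence $\Lip_{\partial W'}(f)\le b$; tightness then forces $\Lip_{W'}(f)\le b$, so $f$ is $b$-Lipschitz on $\overline{W'}$. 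Now fix $x^*\in W'$. Since $z\notin W'$ and $X$ is a length space, for each $\e>0$ there is a curve from $x^*$ to $z$ of length $<d_X(x^*,z)+\e$, and its first exit point $w$ from $W'$ lies on $\partial W'$. Combining $f(x^*)-f(w)\le b\,d_X(x^*,w)$ with $f(w)=b\,d_X(w,z)+c$ and $d_X(x^*,w)+d_X(w,z)<d_X(x^*,z)+\e$ gives $f(x^*)\le b\,d_X(x^*,z)+c+b\e$, and letting $\e\to0$ contradicts $x^*\in W'$.

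The direction \emph{comparison $\Rightarrow$ tightness} is the step I expect to be the main obstacle, since tightness compares interior points to \emph{interior} points whereas comparison with distance functions most naturally controls interior points against a single exterior center. I would bridge this in two steps. Fix an open $W\subseteq U$ and set $L=\Lip_{\partial W}(f)$. First, applying comparison from below \eqref{e.distancecomparison-below} on $W$ with center $z=y\in\partial W$, exponent $b=L$ and $c=f(y)$ — whose hypothesis on $\partial W$ is precisely the one-sided Lipschitz bound $f(x)\ge f(y)-L\,d_X(x,y)$ — yields $f(y)-f(x_0)\le L\,d_X(x_0,y)$ for every $x_0\in W$ and every $y\in\partial W$. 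Second, for fixed $x_0\in W$ I apply comparison from above \eqref{e.distancecomparison} on the punctured open set $W\setminus\{x_0\}$ with center $z=x_0$, exponent $b=L$ and $c=f(x_0)$: its boundary is $\partial(W\setminus\{x_0\})=\partial W\cup\{x_0\}$, and the cone $x\mapsto f(x_0)+L\,d_X(x,x_0)$ dominates $f$ there — on $\partial W$ by the first step and trivially at $x_0$. The conclusion $f(x)\le f(x_0)+L\,d_X(x,x_0)$ for all $x\in W$ holds for \emph{every} $x_0\in W$, which is exactly $\Lip_W(f)\le L$.

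Since the bound $f(x)-f(x_0)\le L\,d_X(x,x_0)$ holds for every ordered pair in $W$, tightness is established, and together with the reduction above this proves the proposition. The only points requiring care are the topological identity $\partial(W\setminus\{x_0\})=\partial W\cup\{x_0\}$ (valid because a non-degenerate length space has no isolated points) and the length-space construction of the near-geodesic and its first exit point used in the first implication; note that neither step invokes local compactness, consistent with the fact that uniqueness in Theorem~\ref{thm:tree main} does not require it.
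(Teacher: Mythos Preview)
Your proof is correct. Note, however, that the paper does not actually prove Proposition~\ref{AMiffCDF}; it simply quotes the result from~\cite{MR2341302}, so there is no in-paper argument to compare against.

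Your argument is a clean, self-contained proof of the cited result. The direction tightness $\Rightarrow$ comparison, via the super-level set $W'=\{f>b\,d_X(\cdot,z)+c\}$, the observation that $f$ equals the cone on $\partial W'$ (hence $\Lip_{\partial W'}(f)\le b$), and a near-geodesic from an interior point to $z$ exiting through $\partial W'$, is the standard route. For the converse you use the two-step trick: first place the cone vertex at a boundary point $y\in\partial W$ and apply comparison from below to get $f(y)-f(x_0)\le L\,d_X(x_0,y)$ for every $x_0\in W$, then puncture at $x_0$ and apply comparison from above on $W\setminus\{x_0\}$, using the first step to verify the boundary hypothesis on $\partial W\cup\{x_0\}$. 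This is essentially the argument in the cited reference, so your approach is in line with the original source. The two care points you single out --- that $\partial(W\setminus\{x_0\})=\partial W\cup\{x_0\}$ because a non-degenerate length space has no isolated points, and the existence of the first exit point along a near-geodesic --- are exactly the right ones, and no local compactness is needed anywhere.
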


\begin{remark} \label{r.boundaryw} {\em The definition of comparison with distance functions from above
would not change if we added the requirement that $z \not \in
\partial W$; if \eqref{e.distancecomparison} or~\eqref{e.distancecomparison-below} fails and $z \in \partial W$,
then it will fail (with a modified $c$) when $W$ is modified to
include some neighborhood of $z$. The definition would also not
change if we required $b > 0$.  If \eqref{e.distancecomparison}
or~\eqref{e.distancecomparison-below} fails with $b=0$, then it
fails for some sufficiently small $b'>0$.}
\end{remark}

We will need to have an analog of the above definition with the real
line $\R$ replaced with $T$.  The definition makes sense when $T$ is
any metric space, but we will only use it in the case when $T$ is a
metric tree.  We say $f:\overline U \to T$ satisfies
\textbf{$T$-comparison} on $U$ if for every $t \in T$, the function
$x\mapsto d_T(t, f(x))$ satisfies comparison with distance functions
from above on $U$.  This generalizes comparison with distance
functions:

\begin{proposition}\label{prop:same for line} If $T$ is the closed interval $[t_1, t_2] \subseteq \R$, then $f: \overline U \to T$
satisfies $T$-comparison on $U$ if and only if it satisfies comparison with distance functions on $U$.
\end{proposition}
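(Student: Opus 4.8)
The plan is to exploit the fact that $T=[t_1,t_2]$ is an interval, so that $d_T(s,r)=|s-r|$ and $f$ takes values in $[t_1,t_2]$. The crucial observation is that at the two endpoints $t_1,t_2\in T$ the distance functions $x\mapsto d_T(t_i,f(x))$ become \emph{affine} in $f$, so that the $T$-comparison hypothesis at the endpoints linearizes into the two one-sided comparison conditions, while for a generic $t$ the distance function $d_T(t,f(\cdot))$ is a maximum of two affine pieces. Each implication of the equivalence corresponds to one of these two observations.

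For the direction ``$T$-comparison $\implies$ comparison with distance functions'', I would use the hypothesis only at the endpoints. Since $f(x)\le t_2$ everywhere, $d_T(t_2,f(x))=t_2-f(x)$, so $T$-comparison at $t=t_2$ asserts that $x\mapsto t_2-f(x)$ satisfies comparison with distance functions from above on $U$. Because the defining implication~\eqref{e.distancecomparison} is invariant under adding a constant to the test function (absorb it into $c$), this is the same as saying that $-f$ satisfies comparison from above, which is precisely comparison with distance functions from below for $f$ in the sense of~\eqref{e.distancecomparison-below}. Symmetrically, $f(x)\ge t_1$ gives $d_T(t_1,f(x))=f(x)-t_1$, and $T$-comparison at $t=t_1$ yields that $f$ itself satisfies comparison from above. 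Together these are exactly comparison with distance functions for $f$.

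For the converse, I would fix an arbitrary $t\in[t_1,t_2]$ and write $g_t(x):=d_T(t,f(x))=|t-f(x)|=\max\{t-f(x),\,f(x)-t\}$. Suppose the hypothesis of~\eqref{e.distancecomparison} holds for $g_t$ on some $\partial W$, i.e.\ $|t-f(x)|\le b\,d_X(x,z)+c$ for all $x\in\partial W$. This single inequality splits into the two affine bounds $f(x)\le b\,d_X(x,z)+(c+t)$ and $f(x)\ge -b\,d_X(x,z)+(t-c)$ on $\partial W$. Applying comparison with distance functions from above to $f$ propagates the first bound to all of $W$, and applying comparison from below propagates the second; recombining the two bounds on $W$ yields $|t-f(x)|\le b\,d_X(x,z)+c$ for all $x\in W$, which is the conclusion of~\eqref{e.distancecomparison} for $g_t$. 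As $t\in T$ was arbitrary, this establishes $T$-comparison.

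I do not expect a serious obstacle: both directions amount to bookkeeping once one notices the affineness of the endpoint distance functions and the $\max$-decomposition of a generic distance function. The only points requiring a little care are the constant shifts (each absorbed by adjusting $c$) and the verification that the two one-sided propagations recombine correctly on $W$; if convenient, I would also invoke Remark~\ref{r.boundaryw} to restrict to $b>0$ and $z\notin\partial W$, though neither restriction is needed for the argument.
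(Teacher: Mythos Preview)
Your proposal is correct and follows essentially the same approach as the paper: the first direction uses the endpoint values $t_1,t_2$ to linearize $d_T(t_i,f(\cdot))$ into $\pm f$ plus a constant, and the converse writes $d_T(t,f(\cdot))=(f-t)\vee(t-f)$ and propagates each piece separately. The only difference is cosmetic: the paper phrases the converse as ``a maximum of two functions with this property,'' while you spell out explicitly the splitting of the boundary inequality, the two one-sided propagations, and the recombination on $W$.
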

\begin{proof}
If $f$ satisfies $T$-comparison on $U$, then the mappings $x\mapsto
d_T(t_1, f(x))=f(x)-t_1$ and $x\mapsto d_T(t_2,f(x))=t_2-f(x)$
satisfy comparison with distance functions from above on $U$, hence
$f$ and $-f$ both satisfy comparison with distance functions from
above. Conversely, if $f$ satisfies comparison with distance
functions on $U$, then for all $t\in [t_1,t_2]$ the mapping
$x\mapsto d_T(f(x), t) = (f(x)-t) \vee (t-f(x))$ satisfies
comparison with distance functions from above because it is a
maximum of two functions with this property.
\end{proof}

Proposition \ref{AMiffCDF} also has a natural generalization, which
is contained in Proposition~\ref{p.treecomparison} below. Note that
the proof of this generalization uses the assumption that $T$ is a
metric tree in the ``only if'' direction; for the ``if'' direction $T$
can be any metric space.

\begin{proposition} \label{p.treecomparison}
Let $U$ be an open subset of a length space $(X,d_X)$, and let
$(T,d_T)$ be a metric tree. A continuous function $f:\overline U\to
T$ satisfies $T$-comparison on $U$ if and only if it is an AMLE of
$f|_{\partial U}$.
\end{proposition}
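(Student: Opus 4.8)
The plan is to reduce everything to the real-valued theory by passing, for each $t\in T$, to the function $g_t\eqdef d_T(t,f(\cdot))\colon\overline U\to\R$, and to exploit the elementary identity $d_T(f(x),f(y))=g_{f(y)}(x)$ together with $g_{f(y)}(y)=0$. Since $T$ is a metric tree it is an absolute $1$-Lipschitz retract, so by \eqref{eq:def boundary} the statement ``$f$ is an AMLE of $f|_{\partial U}$'' is equivalent to $\Lip_W(f)=\Lip_{\partial W}(f)$ for every open $W\subseteq U$; as $\Lip_{\partial W}(f)\le\Lip_{\overline W}(f)=\Lip_W(f)$ always holds by continuity, only the reverse inequality is ever at issue. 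The single tool I extract from \eqref{e.distancecomparison} is the \emph{increasing-slope estimate}: if $v\colon\overline U\to\R$ is continuous and satisfies comparison with distance functions from above on $U$, then for every open $W\subseteq U$ and every $y\in W$,
\[
v(x)-v(y)\ \le\ d_X(x,y)\cdot\max\Big(0,\ \sup_{w\in\partial W}\tfrac{v(w)-v(y)}{d_X(w,y)}\Big)\qquad(x\in W),
\]
which follows by applying \eqref{e.distancecomparison} on the open set $W\smallsetminus\{y\}$ with apex $z=y$ (legitimate by Remark~\ref{r.boundaryw}) and the cone whose slope is the supremum on the right.

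For the ``only if'' direction I assume $f$ satisfies $T$-comparison, fix an open $W\subseteq U$, and take $x,y\in W$. Applying the increasing-slope estimate to $v=g_{f(y)}$ (which satisfies \eqref{e.distancecomparison} because $f(y)\in T$) and using $g_{f(y)}(y)=0$ gives
\[
d_T(f(x),f(y))=g_{f(y)}(x)\ \le\ d_X(x,y)\,\sup_{w\in\partial W}\frac{d_T(f(w),f(y))}{d_X(w,y)}.
\]
For each fixed $w\in\partial W$, applying comparison from above of $v=g_{f(w)}$ \emph{directly} on $W$ with apex $z=w\in\partial W$ and the cone $x\mapsto b\,d_X(x,w)$, where $b=\sup_{w'\in\partial W} d_T(f(w'),f(w))/d_X(w',w)$, yields $d_T(f(y),f(w))\le b\,d_X(y,w)$ for all $y\in W$, whence $d_T(f(w),f(y))/d_X(w,y)\le\Lip_{\partial W}(f)$. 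Combining the two displays gives $d_T(f(x),f(y))\le\Lip_{\partial W}(f)\,d_X(x,y)$, i.e.\ $\Lip_W(f)\le\Lip_{\partial W}(f)$, which is the required equality.

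For the converse I assume $f$ is an AMLE and fix $t\in T$, and must show $g_t$ satisfies \eqref{e.distancecomparison}. If not, there are an open $W\subseteq U$, a point $z\notin W$, $b\ge0$ and $c$ with $g_t\le\phi\eqdef b\,d_X(\cdot,z)+c$ on $\partial W$ but $W_0\eqdef\{x\in W: g_t(x)>\phi(x)\}\ne\emptyset$, $g_t=\phi$ on $\partial W_0$, and $z\notin W_0$. Absolute minimality gives $\Lip_{W_0}(f)=\Lip_{\partial W_0}(f)$, hence $\Lip_{W_0}(g_t)\le\Lip_{W_0}(f)$ (the map $p\mapsto d_T(t,p)$ is $1$-Lipschitz). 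If $b\ge\Lip_{W_0}(f)$, then for $x_1\in W_0$ I choose a near-geodesic in the length space from $x_1$ to $z$ with first exit point $w'\in\partial W_0$ (so $d_X(x_1,z)=d_X(x_1,w')+d_X(w',z)$ up to $\e$); the radial bound $g_t(x_1)-g_t(w')\le\Lip_{W_0}(g_t)\,d_X(x_1,w')\le b\,d_X(x_1,w')$ together with $g_t(w')=\phi(w')$ forces $g_t(x_1)\le\phi(x_1)$, a contradiction. The remaining case $b<\Lip_{W_0}(f)$ is the crux: here I would take the competitor $h$ equal to $f$ off $W_0$ and, on $W_0$, equal to the nearest-point projection of $f(x)$ onto the ball $\bar B(t,\phi(x))$, i.e.\ the point of $[t,f(x)]$ at distance $\phi(x)$ from $t$. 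A median computation in the tree (using geodesic convexity of $d_T(t,\cdot)$ and of balls) shows $d_T(h(x),h(y))\le\max\big(d_T(f(x),f(y)),\,|\phi(x)-\phi(y)|\big)$ for $x,y\in W_0$, and since $|\phi(x)-\phi(y)|\le b\,d_X(x,y)$ this gives $\Lip_{W_0}(h)\le\max(\Lip_{W_0}(f),b)=\Lip_{W_0}(f)$, with \emph{strict} inequality on every pair that is genuinely moved.

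The main obstacle is upgrading this last, non-strict, bound to an actual contradiction with absolute minimality. Because $\Lip_{W_0}(f)=\Lip_{\partial W_0}(f)$ is approached only along pairs tending to $\partial W_0$, where $h$ agrees with $f$, the global constant $\Lip_{W_0}(h)$ need not drop below $\Lip_{W_0}(f)$, so the explicit radial competitor by itself does not close the argument. Overcoming this requires localizing the modification to a subdomain on which $g_t-\phi$ is bounded away from $0$—so that every moved pair is reduced by a definite amount—and then interpolating back to $f$ across a thin collar without reintroducing large slopes. Carrying this out in a general length space, \emph{without} invoking compactness of $\overline{W_0}$, is the delicate point, and is where the finer comparison (viscosity) structure of $\infty$-harmonic tree-valued maps, rather than a single closed-form competitor, has to be brought in.
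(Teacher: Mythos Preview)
Your argument for the direction ``$T$-comparison $\Rightarrow$ AMLE'' is correct and in fact tidier than the paper's. The paper argues by contradiction, locating a short path of large slope inside $W$ and then applying $T$-comparison twice (once with apex inside $W$, once with apex on $\partial W$) to push the large slope out to a pair on $\partial W$. Your two-step bound --- first the increasing-slope estimate for $g_{f(y)}$ with apex $y\in W$, then the direct cone bound for $g_{f(w)}$ with apex $w\in\partial W$ --- achieves the same thing without the contradiction scaffolding. Both arguments use only the definition of $T$-comparison and work for arbitrary target metric spaces $T$; yours is the more transparent packaging.

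The converse direction, ``AMLE $\Rightarrow$ $T$-comparison'', is where your proposal has a genuine gap, as you yourself flag. Two remarks. First, your Case~1 is vacuous: the paper observes (by exactly the near-geodesic argument you give) that $g_t(y)>\phi(y)$ on $W_0$ together with $g_t\le\phi$ on $\partial W_0$ already forces $\Lip_{W_0}(f)>b$, so one is always in your Case~2. Second, the radial-retraction competitor you propose is not how the paper closes the argument, and the obstacle you identify is real: on $\partial W_0$ one controls only the scalar $g_t$, but $f$ can map nearby points of $\partial W_0$ to different branches of $T$, so $\Lip_{\partial W_0}(f)$ can exceed $b$ and the retraction need not strictly reduce $\Lip_{W_0}$.

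The paper's route is completely different and hinges on reducing to the \emph{real-valued} Proposition~\ref{AMiffCDF}. One introduces the radial profile $G(s)=\sup\{g_t(x):x\in\overline W,\ d_X(x_0,x)=s\}$, finds the leftmost point $s_0$ at which $G(s)-bs-c$ attains (in a limsup sense) its positive maximum $M$, and then sharpens the cone by an amount $\e$: setting $F_\e(x)=(b+\e)d_X(x_0,x)+M+c-\e s_0-\e^2$ and $W_\e=\{x\in W:g_t(x)>F_\e(x)\}$, one shows the $W_\e$ are nonempty, nested, and collapse to the level set $\{d_X(x_0,\cdot)=s_0\}\cap\{g_t=M+bs_0+c\}$. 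Each $W_\e$ still witnesses a failure of comparison from above for $g_t$; if $f(W_\e)$ missed every vertex of $T$ it would lie in a single edge-interval and Proposition~\ref{AMiffCDF} (via Proposition~\ref{prop:same for line}) would force comparison to hold --- contradiction. Hence some vertex $v$ lies in $f(W_\e)$ for all small $\e$, and by the shrinking, eventually $v$ is the only vertex in $f(W_\e')$ (the component containing a preimage of $v$). Now AMLE gives $\Lip_{\partial W_\e'}(f)>b+\e$; a tree computation shows the extremal pair $x_\e,y_\e\in\partial W_\e'$ must have $f(x_\e),f(y_\e)$ on the far side of $v$ from $t$, and along a near-geodesic between them one finds $x_1,x_2$ with $f(x_1),f(x_2)$ in a single edge of $T\setminus V$ and slope exceeding $b+\e$. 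This manufactures a failure of comparison from above for a function taking values in an interval, and Proposition~\ref{AMiffCDF} delivers the final contradiction. The essential idea --- absent from your proposal --- is that the tree structure (finitely many vertices) lets one localize until the image sits in the star of a single vertex, where the problem becomes one-dimensional.
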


\begin{proof}
We will first suppose, to obtain a contradiction, that $f$ is not an
AMLE of $f|_{\partial U}$, but satisfies $T$-comparison. Then there
is an open $W \subseteq U$ such that $\Lip_W(f) > \Lip_{\partial
W}(f)$. That is, there is a path $P$ in $X$ connecting points $x$
and $y$ in $W$ whose length $L$ satisfies
\begin{equation}\label{eq:length for cotradiction}
\frac{d_T(f(x),f(y))}{L}
>  \Lip_{\partial W}(f).
\end{equation}
If $y_1$ and $y_2$ are the first and last times $P$ hits $\partial
W$,  $d_T(f(y_1),f(y_2)) \leq \Lip_{\partial W}(f)\cdot d_X(y_1,
y_2)$; hence the property~\eqref{eq:length for cotradiction} holds
for either the portion of $P$ between $x$ and $y_1$ or the portion
between $y_2$ and $y$. Thus, we may take $P$ to be entirely
contained in $W$; replacing $P$ with a slightly shorter sub-path of
$P$, we may assume the endpoints of $P$ are both in $W$ as well, and
that $P$ is some positive distance $\delta$ from $\partial W$.  Set
$$m\eqdef\frac{d_T(f(x),f(y))}{L}>  \Lip_{\partial W}(f).$$  We may then find $x_1$ arbitrarily
close to some fixed point $x_0$ along $P$ satisfying
$$\frac{d_T(f(x_0),f(x_1))}{d_X(x_0,x_1)} \geq m >  \Lip_{\partial W}(f).$$

Now we consider the distance function $m d_X(x_0,\cdot)$. We will
compare it to the function $d_T(f(x_0),f(\cdot))$.  Since the latter
is at least as large as the former at the point $x_1$,
$T$-comparison implies that it must be at least as large at some
point on $\partial W$.  This implies that for any $\epsilon > 0$ we
may find a $z \in \partial W$ where $$m'\eqdef
\frac{d_T(f(x_0),f(z))}{d_X(x_0,z)} \geq m-\epsilon.$$  In
particular, we may assume $m' > \Lip_{\partial W}(f)$.  Next choose
$m'' \in \big(\Lip_{\partial W}(f), m'\big)$. Consider the distance
function $m'' d_X(z, \cdot)$ and compare it to $d_T(f(z),
f(\cdot))$. Since the functions are equal at $z$ and the latter is
larger than the former at $x_0$, the latter must be larger than the
former at some point $w \in (\partial W)\setminus \{z\}$. But this
implies
$$\frac{d_T(f(z), f(w))}{d_X(z,w)} > \Lip_{\partial W}(f),$$ a
contradiction.

We now proceed to the converse. Note that since $T$ is a bounded
metric space, by intersecting $U$ with a large ball it suffices to
prove the converse when $U$ is bounded. Suppose, to obtain a
contradiction, that $f$ is an AMLE of $f|_{\partial U}$ and does not
satisfy $T$-comparison on $U$. Since $f$ does not satisfy
$T$-comparison on $U$, there exists an open $W \subseteq U$, a point
$x_0 \notin W$ and $c\in\R$, $b\ge 0$, such that for some $t\in T$
we have $d_T(t,f(x))\le bd_X(x_0,x)+c$ for all $x\in
\partial W$, yet $d_T(t,f(y))>bd_X(x_0,y)+c$ for some $y\in W$.
Write $F(z)=bd_X(x_0,z)+c$. We may replace $W$ with the connected
component of $\{x\in W :\  d_T(t,f(x)) > F(x) \}$ containing $y$, so
that one has $d_T(t,f(x)) = F(x)$ at the boundary of $W$.  By
looking at a nearly-shortest path from $y$ to $x_0$, we deduce that
$\Lip_W(f)
> b$. If we could also show that $\Lip_{\partial W}(f) = b$ (which is
trivially the case when $T \subseteq \mathbb R$, but not for a more
general metric tree $T$) we would have a contradiction to the AMLE
property of $f$. Instead of proving this for the particular $W$
constructed above, we will show that there exists a smaller $W$ for
which the analogous statement holds.

Consider the  function $$G(s) \eqdef \sup_{\substack{x \in \overline{W}\\
d_X(x_0,x) = s}} d_T(t,f(x)),$$ which is defined on the interval
$[s_1, s_2]$, where $s_1$ and $s_2$ are the infimum and supremum of
the set $\{d_X(x_0,x):\  x \in W\}$, respectively.
By assumption $G(s)$ lies above the line $ bs + c$ for some $s\in
[s_1,s_2]$, though not for $s_1$ and $s_2$. Hence, if we define
\begin{equation*}\label{eq:def:M}
M\eqdef \sup \big\{G(s)-bs-c:\ s\in [s_1,s_2]\big\}
\end{equation*}
then $M>0$. Write
\begin{equation*}\label{eq:def S}
S\eqdef \left\{\sigma\in [s_1,s_2]:\
\limsup_{\substack{s\to\sigma\\s\in[s_1,s_2]}}\big(G(s)-bs-c\big)=M\right\},
\end{equation*}
and note that $S$ is a nonempty closed subset of $[s_1,s_2]$, so that
$s_0\eqdef \inf S\in S$.

For $\e>0$ and $x\in X$ define
$$
F_\e(x)\eqdef (b+\e)d_X(x_0,x)+M+c-\e s_0-\e^2,
$$
and
$$
W_\e\eqdef\big\{x\in W:\ d_T(t,f(x))>F_\e(x)\big\}.
$$
Observe that $W_\e\neq\emptyset$ for all $\e>0$. To see this fix
$\delta>0$. Since $s_0\in S$ there exists $s\in [s_1,s_2]$ such that
$|s-s_0|\le \delta $ and $G(s)-bs-c\ge M-\delta$. By the definition
of $G(s)$, there is $z_0\in \overline W$ satisfying $d_X(z_0,x_0)=s$
and $G(s)\le d_T(t,f(z_0))+\delta$. Since $f$ is continuous at
$z_0$, there is $\eta\in (0,\delta)$ such that if $d_X(z,z_0)<\eta$
then $d_T(f(z),f(z_0))<\delta$. Take $z\in W$ with
$d_X(z,z_0)<\eta$. Then,
\begin{eqnarray*}
d_T(t,f(z))&>&d_T(t,f(z_0))-\delta\\&\ge&G(s)-2\delta\\&\ge&M+bs+c-3\delta\\&\ge
& M+bs_0+c-(3+b)\delta\\&=&F_\e(z)-(b+\e)d_X(x_0,z)+bs_0+\e
s_0+\e^2-(3+b)\delta\\&>&F_\e(z)-(b+\e)(s+\eta)+bs_0+\e
s_0+\e^2-(3+b)\delta\\&\ge&F_\e(z)-(b+\e)(s_0+\delta+\eta)+bs_0+\e
s_0+\e^2-(3+b)\delta\\&>&F_\e(z)+\e^2-(3\delta+2\e\delta+3b\delta).
\end{eqnarray*}
Thus for $\delta$ small enough we have $z\in W_\e$. The following
claim contains additional properties of the sets $W_\e$ that we will
use later.

\begin{claim}\label{claim}
The open sets $\{W_\e\}_{\e>0}$ have the following properties:
\begin{enumerate}
\item If $0<\e_1<\e_2$ then $\overline{W_{\e_1}}\subseteq W_{\e_2},$
\item $\lim_{\e\to 0} \sup_{x\in W_\e}\big| d_X(x_0,x)-s_0\big|=0,$
\item $\lim_{\e\to 0} \sup_{x\in W_\e}\big| d_T(t,f(x))-(M+bs_0+c)\big|=0.$
\end{enumerate}
\end{claim}
\begin{proof}
Fix $0<\e_1<\e_2$ and $x\in \overline{W_{\e_1}}$. Write
$s=d_X(x_0,x)$. Since $d_T(t,f(x))\ge F_{\e_1}(x)$, we have
$G(s)-bs-c\ge\e_1 s+M-\e_1 s_0-\e_1^2$. By the definition of $M$,
this implies that $s\le s_0+\e_1$. Hence,
\begin{multline*}
d_T(t,f(x))\ge(b+\e_1)s+M+c-\e_1
s_0-\e_1^2=F_{\e_2}(x)+(\e_2-\e_1)s_0+\e_2^2-\e_1^2-(\e_2-\e_1)s\\
>F_{\e_2}(x)+(\e_2-\e_1)s_0+\e_2^2-\e_1^2-(\e_2-\e_1)(s_0+\e_1)=F_{\e_2}(x)+\e_2(\e_2-\e_1)>F_{\e_2}(x).
\end{multline*}
Thus $x\in W_{\e_2}$, proving the first assertion of
Claim~\ref{claim}.

To prove the second assertion of Claim~\ref{claim}, note that we
have already proved above that if $x\in W_\e$ then $d_X(x_0,x)\le
s_0+\e$. Thus, if the second assertion of Claim~\ref{claim} fails
there is some $\delta>0$ and a sequence
$\{\e_n\}_{n=1}^\infty\subseteq [0,1]$ with
$\lim_{n\to\infty}\e_n=0$, such that for each $n\in \N$ there is
$z_n\in W_{\e_n}$ with $d_X(z_n,x_0)\le s_0-\delta$. Write
$\sigma_n=d_X(z_n,x_0)$, and by passing to a subsequence assume that
$\lim_{n\to \infty} \sigma_n=\sigma_\infty$ exists. Then
$\sigma_\infty\le s_0-\delta$ and,
\begin{multline*}\limsup_{n\to\infty} \big(G(\sigma_n)-b\sigma_n-c\big)\ge \limsup_{n\to\infty} d_T(t,f(z_n))-b\sigma_\infty-c\ge \limsup_{n\to\infty} F_{\e_n}(z_n)-b\sigma_\infty-c\\=
\limsup_{n\to\infty}\big((b+\e_n)\sigma_n+M+c-\e_ns_0-\e_n^2\big)-b\sigma_\infty-c=M.\end{multline*}
Thus $\sigma_\infty\in S$. But since $\sigma_\infty\le
s_0-\delta$, this contradicts the choice of $s_0$ as the minimum of $S$. The proof of the second
assertion of Claim~\ref{claim} is complete. The third assertion of
Claim~\ref{claim} now follows, since if $x\in W_\e$ then by writing
$s=d_X(x,x_0)$ we see that
\begin{multline*}
b(s-s_0)\ge [(G(s)-bs-c) -M] + b(s-s_0)\ge
d_T(t,f(x))-(M+bs_0+c)\\\ge F_\e(x)-(M+bs_0+c)=b(s-s_0)+\e s-\e
s_0-\e^2.
\end{multline*}
Thus
$$
\sup_{x\in W_\e}\big| d_T(t,f(x))-(M+bs_0+c)\big|\le b\sup_{x\in
W_\e}\big| d_X(x_0,x)-s_0\big|+\e s_0+\e^2,
$$
and therefore the third assertion of Claim~\ref{claim} follows from
the second assertion of Claim~\ref{claim}.
\end{proof}

We are now in position to conclude the proof of
Proposition~\ref{p.treecomparison}. Let $V$ be the the set of
vertices of the metric tree $T$. We claim that for all $\e>0$ such
that $\e s_0+\e^2\le M$ we have $f(W_\e)\cap V\neq \emptyset$
(recall that by our assumption we have $M>0$). Indeed, if $x\in
\partial W_\e$ then either $d_T(t,f(x))=F_\e(x)$ or $x\in \partial
W$. In the latter case, by assumption we have $d_T(t,f(x))\le
bd_X(x_0,x)+c\le F_\e(x)$, where the last inequality follows from
$\e s_0+\e^2\le M$. Thus, by the definition of $W_\e$, the function
$f$ does not satisfy $T$-comparison on $W_\e$. Since $f$ is an AMLE of $f|_{\partial U}$, Proposition
\ref{AMiffCDF}, combined with Proposition~\ref{prop:same for line},
now implies that $f|_{W_\e}$ must take values in $V$.

Due to part $(1)$ of Claim~\ref{claim}, there exists $v\in V$ such
that $v\in \bigcap_{\e>0} W_\e$. Let $W_\e'$ be the connected
component of $W_\e$ whose image under $f$ contains $v$. By part
$(3)$ of Claim~\ref{claim}, for $\e$ small enough we have
$f(W_\e')\cap V=\{v\}$. Since, by the definition of $W_\e$ and the
connectedness of $W_\e'$, for $x\in \partial W_\e'$ we have
$d_T(t,f(x))=F_\e(x)$, by considering a nearly-shortest path from a
point in $W_\e'$ to $x_0$ we see that $\Lip_{W_\e'}(f)>b+\e$. Since
$f$ is an AMLE of $f|_{\partial U}$, it follows that $\Lip_{\partial
W_\e'}(f)>b+\e$. This implies that there are distinct $x_\e,y_\e\in
\partial W_\e'$ such that
$d_T(f(x_\e),f(y_\e))>(b+\e)d_X(x_\e,y_\e)$. But, since
$d_T(t,f(x_\e))=F_\e(x_\e)$ and $d_T(t,f(y_\e))=F_\e(y_\e)$, it must
be the case that the distance from $t$ of both $f(x_\e)$ and
$f(y_\e)$ is at least their distance from $v$. Indeed, if $t\in
\bigcap_{\e>0} f(W_\e')$ then it would follow from part $(3)$ of
Claim~\ref{claim} that $t=v$, and there is nothing to prove.
Otherwise, for $\e$ small enough $t\notin f(W_\e')$, and therefore,
since $T$ is a tree, if at least one of the points $f(x_\e),f(y_\e)$ is closer to $t$ than to $v$ then the points $t,f(x_\e),f(y_\e)$ all lie on
the same geodesic in $T$, implying that:
\begin{multline*}
d_T(f(x_\e),f(y_\e))=\big|d_T(t,f(x_\e))-d_T(t,f(y_\e))\big|=|F_\e(x_\e)-F_\e(y_\e)|\\=
(b+\e)\big|d_X(x_0,x_\e)-d_X(x_0,y_\e)\big|\le (b+\e)d_X(x_\e,y_\e),
\end{multline*}
a contradiction to the choice of $x_\e,y_\e$.

Having proved that  both $f(x_\epsilon)$ and $f(y_\epsilon)$ lie
further away from $t$ than $v$, if we consider a nearly
shortest-path between $x_\e$ and $y_\e$, it must include points
$x_1,x_2\in \overline{W_\e'}$ such that $f(x_1)$ and $f(x_2)$ lie on the same component
$I$ of $T\setminus V$ --- on the other side of $v$ from $t$ --- and
satisfy
\begin{equation}\label{eq:violation}
\frac{d_T(f(x_1),f(x_2))}{d_X(x_1,x_2)}>(b+\e).
\end{equation}
Suppose that $f(x_1)$ is closer to  $t$ that $f(x_2)$. Note that
\begin{multline*}
d_T(f(x_1),f(x_\e))=d_T(t,f(x_\e))-d_T(t,f(x_1))\le
F_\e(x_\e)-F_\e(x_1)\\=(b+\e)\big(d_X(x_0,x_\e)-d_X(x_0,x_1)\big)\le
(b+\e)d_X(x_\e,x_1).
\end{multline*}
Moreover, if $f(x)=x_1$ then trivially $d_T(f(x_1),f(x))\le
(b+\e)d_X(x_1,x)$. Thus, if we let $J\subseteq I$ be the open
interval joining $f(x_\e)$ and $f(x_1)$, then
$d_T(f(x_1),f(\cdot))\le (b+\e)d_X(x_1,\cdot)$ on $\partial
\left(f^{-1}(J)\cap W_\e'\right)$. By~\eqref{eq:violation} we now
have a violation of $T$-comparison on $f^{-1}(J)$, which contradicts
Proposition~\ref{AMiffCDF}.
\end{proof}

\section{Uniqueness} \label{s.uniqueness}

In this section we prove the uniqueness half of Theorem \ref{thm:tree main} (which does not require the locally compact assumption) as Lemma \ref{l.uniquenesshalf} below.  Before doing so, we prove some preliminary lemmas.

\begin{lemma} \label{l.compabovelocal} Suppose that $X$ is a length space, that $Y \subseteq X$ is closed, that $f:Y \to \mathbb R$ is Lipschitz and bounded, and that $\widetilde f:X \to \R$ is the AMLE of $f$.  (Existence and uniqueness of $\widetilde f$ are proved in \cite{PSSW09}.)  Suppose that $g:X\to \R$ is another bounded and continuous extension of $f$, and that for some fixed $\delta > 0$, this $g$ satisfies comparison with distance functions from above on every radius $\delta$ ball centered in $X \setminus Y$.  Then $g \leq \widetilde f$ on $X$.
\end{lemma}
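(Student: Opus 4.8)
The plan is to read this statement as the fundamental comparison principle: $g$ plays the role of a (local) subsolution and $\widetilde f$, being the AMLE, is in particular a supersolution, and since the two agree on $Y$ one should obtain $g\le \widetilde f$. Set $U\eqdef X\setminus Y$, which is open. First I would record that, by Proposition~\ref{AMiffCDF}, $\widetilde f$ satisfies comparison with distance functions on all of $U$ (it is the AMLE of $\widetilde f|_{\partial U}$, since $\partial U\subseteq Y$ and the AMLE condition only concerns open subsets of $U$); in particular it satisfies comparison with distance functions from below on $U$. By contrast $g$ is assumed only to satisfy comparison from above, and only on balls of the fixed radius $\delta$ — so the whole difficulty is to leverage this purely local, one-sided information together with the global two-sided information available for $\widetilde f$.

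Arguing by contradiction, suppose $\sup_X(g-\widetilde f)\eqdef M>0$. Since $g$ and $\widetilde f$ are bounded, $M<\infty$, and since both are continuous and agree on $Y$, the difference $g-\widetilde f$ vanishes on $Y$; hence any point where $g-\widetilde f$ exceeds $M/2$ lies in $U$. The engine of the proof is the standard \emph{increasing slope} estimate that comparison with distance functions supplies. For the subsolution $g$, comparison from above with the cone based at a point $x_0$ gives, for $r\le \delta$ and $\overline{B_r(x_0)}\subseteq U$, that $g(x)\le g(x_0)+a^+(x_0,r)\,d_X(x,x_0)$ on $B_r(x_0)$, where $a^+(x_0,r)\eqdef \max_{\partial B_r(x_0)}\frac{g-g(x_0)}{r}\ge 0$ and $r\mapsto a^+(x_0,r)$ is nondecreasing; symmetrically, comparison from below for $\widetilde f$ controls its descent slopes. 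Here the hypothesis that $g$ has comparison from above on every radius-$\delta$ ball is exactly what makes these cone estimates legitimate at scale $\delta$, and hence on every smaller concentric ball.

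Next I would run a maximum-propagation argument. Fix a small $\eta>0$ and a point $x_0$ with $(g-\widetilde f)(x_0)>M-\eta$. Using $g-\widetilde f\le M$ everywhere and evaluating at the point $y$ on $\partial B_r(x_0)$ where $g$ is largest shows that $\widetilde f$ must increase from $x_0$ to $y$ at rate essentially $a^+(x_0,r)$ (indeed $\widetilde f(y)-\widetilde f(x_0)> a^+(x_0,r)\,r-\eta$). Combined with the increasing-slope monotonicity, this lets one select, at a scale $r\le\delta$, a new point that again nearly attains $M$ while $\widetilde f$ has gained a definite amount. Iterating produces a sequence $x_0,x_1,x_2,\dots$ of steps of length bounded below along which $g-\widetilde f$ stays above $M-\eta$ — so the sequence never enters the neighborhood of $Y$ on which $g-\widetilde f<M/2$ — while $\widetilde f(x_k)$ increases without bound. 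This contradicts the boundedness of $\widetilde f$, and the contradiction establishes $g\le\widetilde f$.

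I expect the main obstacle to be precisely this last step. The comparison principle for $\infty$-harmonic-type functions is genuinely delicate, and because $X$ is not assumed locally compact the supremum $M$ need not be attained, so one cannot simply examine a maximizer and compare slopes. The real work is to show that the near-maximal value $M-\eta$ can be \emph{sustained} along a walk of steps of a definite length — which is where the interplay between the nondecreasing ascent slope $a^+(x_0,r)$ of the subsolution $g$ and the descent slopes of the supersolution $\widetilde f$ must be balanced against the Lipschitz constant of $\widetilde f$ — so that the boundedness of $\widetilde f$, explicitly assumed in the statement, can be driven to a contradiction. The fixed scale $\delta$ is exactly what keeps every cone estimate in the argument valid.
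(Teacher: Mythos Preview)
The paper's proof is entirely different from yours: it does not argue analytically at all but defers to the game theory of \cite{PSSW09}. The observation there is that local comparison with distance functions from above on balls of radius greater than $2\epsilon$ gives player~I a strategy in $\epsilon$-step Tug of War making $g(v_k)$ a submartingale, whence $g\le f_\epsilon$ (the value of the $\epsilon$-game); then $f_\epsilon\to\widetilde f$ as $\epsilon\to 0$. The fixed $\delta$ enters only through the requirement $2\epsilon<\delta$.

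Your Armstrong--Smart style propagation is a plausible alternative in principle, but the iteration you describe has a real gap, exactly at the step you flag. From $(g-\widetilde f)(x_0)>M-\eta$ and the global bound $g-\widetilde f\le M$, choosing $y\in\partial B_r(x_0)$ to (nearly) maximize $g$ yields $\widetilde f(y)-\widetilde f(x_0)>a^+(x_0,r)\,r-\eta$, i.e.\ a \emph{lower} bound on the growth of $\widetilde f$. This is the wrong inequality: to keep $(g-\widetilde f)(y)$ near $M$ you need an \emph{upper} bound on $\widetilde f(y)-\widetilde f(x_0)$ that matches the growth of $g$, and nothing you have written provides one. Comparison from below for $\widetilde f$ controls how fast $\widetilde f$ can \emph{fall}, not how fast it can rise, so it does not help at the point where $g$ is largest. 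The ``increasing-slope monotonicity'' you invoke is the monotonicity of $r\mapsto a^+(x_0,r)$ for a fixed center; it says nothing about the slope at the \emph{new} center $y$, which is what the iteration needs. Separately, the ``definite amount'' of gain requires a uniform positive lower bound on $a^+$ along the walk, and you have not ruled out $a^+(x_0,r)$ being arbitrarily small at the outset. A correct analytic comparison argument of this flavor does exist, but it works with the sup/inf convolutions $g^\epsilon$ and $\widetilde f_\epsilon$ and the increasing-slope estimate of Lemma~\ref{l.armstrongsmart} applied to both, coupled more delicately than your sketch indicates; as written, the propagation step does not go through.
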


\begin{proof}
This is proved (though not explicitly stated) in \cite{PSSW09}.  Precisely, it is shown there that for $\e>0$, the comparison with distance functions from above on balls of radius larger than $2 \epsilon$ implies that the first player in a modified tug of war game (with game position $v_k$ and step size $\epsilon$) can make $g(v_k)$ a submartingale until the termination of the game, which in turn implies that $g \leq f_\epsilon$ where $f_\epsilon$ is the value of this game.  It is also shown that $\lim_{\epsilon \to 0} f_\epsilon = \widetilde f$ holds on $X$.  Taking $\epsilon \to 0$ (and noting $2 \epsilon < \delta$ for small enough $\epsilon$) gives $g \leq \widetilde f$.
\end{proof}

The following was proved in \cite[Lem.~ 5]{AS10}.  The statement in \cite{AS10} was only made for the special case $X \subseteq \R^n$, but the (short) proof was not specific to $\mathbb R^n$.  For completeness, we copy the proof from \cite{AS10}, adapted to our notation.  We will vary
the presentation just slightly --- using suprema over open balls instead of maxima over closed balls --- because in our context (since we do not assume any kind of local compactness) maxima of continuous functions on closed balls are not necessarily obtained.

\begin{lemma} \label{l.armstrongsmart} Let $(X,d_X)$ be a length space, $x_0\in X$ and $\e>0$.
Suppose that $f:X\to \R$ satisfies comparison with distance functions from above on a domain containing $B(x_0,2\e)$.  Write $$f^\epsilon(x) \eqdef \sup_{B(x,\e)} f,\quad f_\epsilon(x) \eqdef \inf_{B(x,\e)} f$$and $$S^+_\epsilon f(x) \eqdef \sup_{y \in B(x,\e)} \frac{f(y) - f(x)}{\epsilon}, \quad S^-_\epsilon f(x) \eqdef \sup_{y \in B(x,\e)} \frac{f(x) - f(y)}{\epsilon}.$$
Then $$S_\epsilon^- f^\epsilon(x_0) \le S^+_\epsilon f^\epsilon(x_0) .$$ \end{lemma}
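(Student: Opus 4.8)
The plan is to reduce the claim to a single ``sub-mean-value'' inequality and then to prove that inequality using the cone estimates furnished by comparison with distance functions from above. First I would record the elementary identities
$$S^+_\e g(x_0)=\frac{g^\e(x_0)-g(x_0)}{\e},\qquad S^-_\e g(x_0)=\frac{g(x_0)-g_\e(x_0)}{\e},$$
valid for any $g$ (they follow by pulling $g(x_0)$ out of the supremum over $B(x_0,\e)$). Applying these with $g=f^\e$ turns the desired inequality $S^-_\e f^\e(x_0)\le S^+_\e f^\e(x_0)$ into
$$2f^\e(x_0)\le (f^\e)^\e(x_0)+(f^\e)_\e(x_0).$$
Here the length-space structure gives $(f^\e)^\e(x_0)=\sup_{B(x_0,2\e)}f$, which is exactly where the hypothesis of comparison on a domain containing $B(x_0,2\e)$ is consumed, and $(f^\e)_\e(x_0)=\inf_{x\in B(x_0,\e)}\sup_{B(x,\e)}f$. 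Writing $M\eqdef f^\e(x_0)=\sup_{B(x_0,\e)}f$ and fixing a near-maximizer $z\in B(x_0,\e)$ with $f(z)\approx M$ (sups need not be attained here, so I keep an auxiliary $\eta>0$ throughout), the inequality to prove is that the inner infimum is at least $2M-\sup_{B(x_0,2\e)}f$.

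For the upper-slope side I would argue as in the classical sup-convolution estimate: for every $w\in B(z,\e)$ one has $d_X(x_0,w)<2\e$, so concatenating near-geodesics produces a $y\in B(x_0,\e)$ with $w\in B(y,\e)$, whence $f^\e(y)\ge f(w)$. Taking suprema gives $S^+_\e f^\e(x_0)\ge S^+_\e f(z)$, equivalently $\sup_{B(x_0,2\e)}f-M\ge \e\, S^+_\e f(z)$. Setting $b\eqdef S^+_\e f(z)$ and applying comparison with distance functions from above on $B(z,\e)\setminus\{z\}$ with the cone $f(z)+b\,d_X(z,\cdot)$ yields the one-sided cone bound
$$f(p)\le f(z)+b\,d_X(z,p)\qquad\text{for all }p\in B(z,\e),$$
and in particular $b\e\approx \sup_{B(z,\e)}f-M\le \sup_{B(x_0,2\e)}f-M$. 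Consequently it suffices to prove the lower bound $(f^\e)_\e(x_0)\ge M-b\e$, i.e. that for every $x\in B(x_0,\e)$ the ball $B(x,\e)$ contains a point where $f\ge M-b\e$.

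The main obstacle is exactly this lower bound, and it is where all the subtlety lies: comparison with distance functions from above produces only \emph{upper} bounds on $f$ (subsolutions of the $\infty$-Laplace equation are not convex, so no midpoint or geodesic-convexity is available), whereas here I must prevent $f$ from dropping too quickly below the near-maximum value $f(z)$. I would extract this from the maximality of $z$ together with the Crandall--Evans--Gariepy monotonicity of the slope functionals that follows from comparison with cones from above, namely that $r\mapsto S^+_r f(c)$ is nondecreasing and $r\mapsto S^-_r f(c)$ is nonincreasing. Concretely, for $x\in B(x_0,\e)$ with $d_X(x,z)>\e$ I would choose $q\in B(x,\e)$ on a near-geodesic toward $z$ with $t\eqdef d_X(z,q)\le\e$, so that $B(z,\e)$ already contains the maximizing region and $f^\e(x)\ge f(q)\ge \inf_{B(z,\e)}f$, and then bound the downward slope of $f$ at $z$ by $b$ using these monotonicity facts. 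The delicate points I expect to fight with are (i) keeping every cone center inside $B(x_0,\e)$ and every radius at most $\e$, so that all comparisons stay inside the region $B(x_0,2\e)$ where the hypothesis applies, and (ii) controlling the drop of $f$ in the specific direction from $z$ toward $x$, since the global downward slope $S^-_\e f(z)$ need not be dominated by $b=S^+_\e f(z)$; handling this directional issue, rather than the bookkeeping with $\eta$ and non-attained suprema, is the crux of the argument.
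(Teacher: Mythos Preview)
Your reduction to the inequality
\[
2f^\e(x_0)\le (f^\e)^\e(x_0)+(f^\e)_\e(x_0)
\]
is exactly right and is also the first step in the paper. The genuine gap is in the second half: you center the comparison cone at the near-maximizer $z$ and then try to manufacture a \emph{lower} bound $(f^\e)_\e(x_0)\ge M-b\e$ from comparison with distance functions \emph{from above}. As you yourself flag, this is the crux, and the directional control you would need (that $f$ does not drop faster than slope $b$ in the particular direction from $z$ toward $x$) simply is not available from one-sided cone comparison; the Crandall--Evans--Gariepy monotonicity you invoke goes the wrong way for $S^-_r$ (it is nonincreasing in $r$, so shrinking $r$ can only raise it). So the plan stalls at exactly the point you identify.

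The paper avoids this entirely by two simple changes of viewpoint. First, it uses the \emph{trivial} lower bound $(f^\e)_\e(x_0)\ge f(x_0)$: for every $x\in B(x_0,\e)$ the ball $B(x,\e)$ contains $x_0$, so $f^\e(x)\ge f(x_0)$. Combined with $(f^\e)^\e(x_0)=f^{2\e}(x_0)$ this reduces everything to
\[
2f(y_0)-f(x_0)-f^{2\e}(x_0)\le 0\qquad\text{(up to an }O(\delta)\text{ error)},
\]
where $y_0\in B(x_0,\e)$ is a near-maximizer of $f$ on $B(x_0,\e)$. Second, the comparison cone is centered at $x_0$, not at the near-maximizer: with slope $b'=(f^{2\e}(x_0)-f(x_0))/(2\e)\ge 0$ the inequality $f(w)\le f(x_0)+b'\,d_X(w,x_0)$ is tautological on $\partial B(x_0,2\e)$ and at $w=x_0$, so comparison from above pushes it to all of $B(x_0,2\e)$. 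Evaluating at $w=y_0$ and using $d_X(y_0,x_0)<\e$ gives precisely the displayed inequality. No lower bounds on $f$ beyond the value $f(x_0)$ are ever needed.
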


\begin{proof}
For $\delta > 0$ we may select $y_0 \in B(x_0,\e)$ and $z_0 \in B(x_0,2\epsilon)$ such that $|f(y_0) - f^\epsilon(x_0)|\le \delta$ and $|f(z_0) - f^{2 \epsilon}(x_0)|\le\delta$.  Then,
\begin{eqnarray}\label{eq:use def}
\nonumber\epsilon\left(S^-_\epsilon f^\epsilon(x_0) - S^+_\epsilon f^\epsilon(x_0)\right) &=& 2 f^\epsilon(x_0) - (f^\epsilon)^\epsilon(x_0) - (f^\epsilon)_\epsilon(x_0)  \\ \nonumber  &\leq&  2 f^\epsilon(x_0) - f^{2\epsilon}(x_0) - f(x_0) \\& \leq& 2f(y_0) - f(z_0) -f(x_0) + 2 \delta,
\end{eqnarray}
where we used the fact that $(f^\epsilon)^\epsilon(x_0)=f^{2\e}(x_0)$ (since $X$ is a length space), and that by definition $(f^\epsilon)_\epsilon(x_0)\ge f(x_0)$.

Note that if $d_X(w,x_0)=2\e$ then $$f(w)\le f^{2\e}(x_0)=f(x_0)+\frac{f^{2\e}(x_0) - f(x_0)}{2 \epsilon} d_X(w, x_0).$$ Hence for all $w \in \partial\big(B(x_0,2\e) \setminus \{x_0\}\big)$ we have
\begin{equation}\label{eq;simple}
f(w) \leq f(x_0) + \frac{f^{2\e}(x_0)  - f(x_0)}{2 \epsilon} d_X(w, x_0).
\end{equation}
Since $f^{2\e}(x_0)  - f(x_0)\ge 0$, we may apply the fact that $f$ satisfies comparison with distance functions from above to deduce that~\eqref{eq;simple} holds for every $w \in B(x_0,2\epsilon) \setminus \{x_0\}$, and thus for every $w \in B(x_0,2\epsilon)$.  Substituting $w = y_0$, we see that
\begin{eqnarray}\label{eq:for combination}
\nonumber2f(y_0)-f(x_0)-f(z_0)&\le& f(x_0)-f(z_0)+ \frac{f^{2\e}(x_0)  - f(x_0)}{\epsilon} d_X(y_0, x_0)\\&\le&
-\left(1-\frac{d_X(y_0,x_0)}{\e}\right)\left(f^{2\e}(x_0)  - f(x_0)\right)+\delta\nonumber\\&\le& \delta,
\end{eqnarray}
where we used the fact that $d_X(y_0,x_0)\le \e$. Since~\eqref{eq:for combination} holds for all $\delta>0$, the required result follows from a combination of~\eqref{eq:use def} and~\eqref{eq:for combination}.
\end{proof}

\begin{lemma} \label{l.coveringreduction}
Assume the following structures and definitions:
\begin{enumerate}
\item A length space $X$, a closed $Y\subseteq X$, a metric tree $T$, and a Lipschitz $f:Y \to T$.
\item A fixed $x_0 \in X \setminus Y$ and the set $\widehat X$ defined as the space of finite-length closed paths in $X$ (parameterized at unit speed) that begin at $x_0$ and remain in $X \setminus Y$ except possibly at right endpoints.
\item A metric on $\widehat X$ defined as follows: given $\widehat u, \widehat v \in \widehat X$, $d_{\widehat X}\left(\widehat u, \widehat v\right)$ is the sum of the lengths of the portions of the two paths that occur {\em after} the largest time at which $\widehat u$ and $\widehat v$ agree.  (Note that $\widehat X$ is an $\mathbb R$-tree under this metric.)
\item The ``covering map'' $M:\widehat X \to X$ that sends a path in $\widehat X$ to its right endpoint.
\end{enumerate}
If $\widetilde f:X\to T$ is an AMLE of $f$, then $\widehat f\eqdef
\widetilde f \circ M:\widehat X\to T$ is an AMLE of $f \circ M$
(which is defined on $\widehat Y\eqdef M^{-1}(Y)$). \end{lemma}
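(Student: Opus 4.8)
The plan is to deduce the statement from the comparison characterization of Proposition~\ref{p.treecomparison}, which collapses the tree-valued problem to a purely real-valued one. Since $\widetilde f$ is an AMLE of $f$, Proposition~\ref{p.treecomparison} tells us that for every $t\in T$ the function $\phi_t\eqdef d_T(t,\widetilde f(\cdot))$ satisfies comparison with distance functions from above on $X\setminus Y$. Applying the same proposition upstairs, it suffices to prove that for every $t\in T$ the function $\phi_t\circ M=d_T(t,\widehat f(\cdot))$ satisfies comparison with distance functions from above on $\widehat X\setminus\widehat Y=M^{-1}(X\setminus Y)$. Hence everything reduces to the following transfer principle: if $\phi:X\to\R$ is continuous, bounded (recall $T$ is bounded), and satisfies comparison with distance functions from above on $X\setminus Y$, then $\phi\circ M$ satisfies it on $\widehat X\setminus\widehat Y$.

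Before the transfer I would record two geometric facts about $M$. First, $M$ is $1$-Lipschitz: if $\widehat u,\widehat v$ have longest common prefix ending at $\widehat m$, then concatenating the tail of $\widehat u$ reversed with the tail of $\widehat v$ is a path in $X$ from $M(\widehat u)$ to $M(\widehat v)$ of length $d_{\widehat X}(\widehat u,\widehat v)$, so $d_X(M(\widehat u),M(\widehat v))\le d_{\widehat X}(\widehat u,\widehat v)$. Second, and crucially, $M$ maps balls \emph{onto} balls: for $\widehat u\in\widehat X\setminus\widehat Y$ and any $0<r<d_X(M(\widehat u),Y)$ one has $M\big(B_{\widehat X}(\widehat u,r)\big)=B_X(M(\widehat u),r)$. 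Here $\subseteq$ is the $1$-Lipschitz property, while for $\supseteq$ any $x\in B_X(M(\widehat u),r)$ is the endpoint of a length-$<r$ path from $M(\widehat u)$, which automatically stays inside $B_X(M(\widehat u),r)\subseteq X\setminus Y$; appending it to $\widehat u$ produces a legal element of $\widehat X$ lying in $B_{\widehat X}(\widehat u,r)$ and mapping to $x$ (this is exactly where the requirement that paths remain in $X\setminus Y$ is used).

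Now comes the heart of the matter, which is also where I expect the main obstacle. The map $M$ is \emph{not} a local isometry: distinct geodesics leaving $\widehat u$ can fold onto nearby points of $X$, so $d_X(M(\widehat u),M(\widehat v))$ is typically strictly smaller than $d_{\widehat X}(\widehat u,\widehat v)$. Consequently the pullback of a downstairs cone is strictly below the corresponding tree cone, the tree cones are ``too flat,'' and one cannot transport the cone-comparison inequalities pointwise --- the naive projection of the hypothesis runs the wrong way. The ball-onto-ball property is exactly what repairs this, because comparison with distance functions from above is governed by \emph{suprema over balls} rather than by individual distances, and suprema are preserved by $M$ even though distances are distorted. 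Concretely, with the increasing-slope functional $S^+_r$ of Lemma~\ref{l.armstrongsmart}, the identity $M(B_{\widehat X}(\widehat u,r))=B_X(M(\widehat u),r)$ together with $\phi\circ M$ and $\phi$ sharing the value at the center gives, for all $0<r<d_X(M(\widehat u),Y)$,
\begin{equation*}
S^+_r(\phi\circ M)(\widehat u)=\frac{\sup_{B_{\widehat X}(\widehat u,r)}\phi\circ M-\phi(M(\widehat u))}{r}=\frac{\sup_{B_X(M(\widehat u),r)}\phi-\phi(M(\widehat u))}{r}=S^+_r\phi(M(\widehat u)).
\end{equation*}
Thus the increasing slopes of $\phi\circ M$ at $\widehat u$ coincide, for small radii, with those of $\phi$ at $M(\widehat u)$, despite the two metrics differing.

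To finish I would invoke the standard characterization (comparison with cones from above; see~\cite{Jen93,CEG01,MR2341302} and the reasoning underlying Lemma~\ref{l.armstrongsmart}) that a continuous function satisfies comparison with distance functions from above if and only if it does so locally, which in the increasing-slope formulation means that $r\mapsto S^+_r(\cdot)$ is nondecreasing on small radii at every point. Since $\phi$ satisfies this downstairs, the displayed identity transports it to $\phi\circ M$ at every $\widehat u\in\widehat X\setminus\widehat Y$ on the range $0<r<d_X(M(\widehat u),Y)$, and the locality of the comparison property then upgrades this to comparison with distance functions from above on all of $\widehat X\setminus\widehat Y$. The whole argument hinges on replacing the metric (distance) data that $M$ distorts by supremal (ball) data that $M$ preserves; note that local compactness of $X$ is never invoked, consistent with the fact that uniqueness does not require it.
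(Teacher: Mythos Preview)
Your approach via Proposition~\ref{p.treecomparison} and the ball-onto-ball property of $M$ is natural and genuinely different from the paper's, but there is a gap in the last step. The identity $S^+_r(\phi\circ M)(\widehat u)=S^+_r\phi(M(\widehat u))$, and hence the monotonicity of $r\mapsto S^+_r(\phi\circ M)(\widehat u)$, is established only for $r<d_X(M(\widehat u),Y)$; the characterization you invoke would require it for all $r<d_{\widehat X}(\widehat u,\widehat Y)$, and since $M$ is merely $1$-Lipschitz the latter can be strictly larger. For $r\ge d_X(M(\widehat u),Y)$ your ball-onto-ball argument genuinely fails (near-geodesics from $M(\widehat u)$ may be forced through $Y$), so the identity is unavailable there. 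You then appeal to ``locality of the comparison property'' to close the gap, but the assertion that comparison with distance functions from above on a ball about each point (of varying radius) implies the same globally is neither proved in the paper nor a routine citation in the length-space setting without local compactness. The references you give (\cite{Jen93,CEG01}) are $\R^n$-specific, and \cite{MR2341302} supplies Proposition~\ref{AMiffCDF} (the AMLE$\Leftrightarrow$CDF equivalence), not an increasing-slope characterization or a local-to-global principle. Lemma~\ref{l.compabovelocal} is the nearest tool in the paper, but it assumes a \emph{fixed} radius $\delta$ and compares against a pre-existing real-valued AMLE, neither of which you have here.

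The paper avoids this issue by verifying the AMLE inequality $\Lip_{\widehat W}(\widehat f)\le\Lip_{\partial\widehat W}(\widehat f)$ directly rather than transferring $T$-comparison. In place of your ball-onto-ball observation, it uses that $M$ is \emph{path-length preserving}: a near-geodesic in $\widehat W$ projects to a path in $X$ of the same length. Assuming a violation, one locates $x_0\in W=M(\widehat W)$ where $g(\cdot)=d_T\big(\widetilde f(\cdot),\widetilde f(x_0)\big)$ has slope exceeding $m$ in two opposite components of $T\setminus\{\widetilde f(x_0)\}$, then applies Lemma~\ref{l.armstrongsmart} \emph{iteratively} to $g^\epsilon$ to propagate this slope along two chains of points until each reaches $\partial W$. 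Lifting the concatenated chain back through $M$ yields an arc in $\widehat W$ between two points of $\partial\widehat W$ along which $\widehat f$ changes by more than $m$ times the length, contradicting $m>\Lip_{\partial\widehat W}(\widehat f)$. The point is that Lemma~\ref{l.armstrongsmart} is used only step by step to build a sequence, never to assert a global comparison property for $\phi\circ M$; that is precisely the difficulty your argument has not overcome.
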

\begin{proof}First we claim that $M$ is path-length preserving.  That is, if
$\widehat \gamma$ is any rectifiable path in $\widehat X$ then
$\gamma \eqdef M \circ \widehat \gamma$ is a rectifiable path of the
same length in $X$. This is true by definition if $L \circ \widehat
\gamma$ (here $L(\cdot)$ denotes path length) is strictly
increasing, and similarly if $L \circ \widehat \gamma$ is strictly
decreasing.  Since the length of $\widehat \gamma$ is the total
variation of $L \circ \widehat \gamma$ (and the latter is finite),
the general statement can be derived by approximating $\widehat
\gamma$ with paths for which $L \circ \widehat \gamma$ is piecewise
monotone. To do this, first note that one can take an increasing set
of times $0=t_0, t_1, \ldots, t_k$ such that the total variation of
$L \circ \widehat \gamma$ restricted to those times is arbitrarily
close to the unrestricted total variation. Then the length of
$\gamma$ traversed between times $t_j$ and $t_{j+1}$ is at least
$r\eqdef|L \circ \widehat \gamma(t_j) - L \circ \widehat
\gamma(t_{j+1})|$
--- this is because the longer of the two paths $\widehat \gamma(t_j)$
and $\widehat \gamma(t_{j+1})$ contains a segment of length at least
$r$ that is not part of the other, and $\gamma$ must traverse all
the points of that segment in order (or in reverse order) somewhere
between times $t_j$ and $t_{j+1}$.

Consider an open subset $\widehat W \subseteq \widehat X \setminus
\widehat Y$ and note that $W\eqdef M\left(\widehat W\right)$ is also
open.  We need to show that if $\widetilde f$ is an AMLE of $f$ then
we cannot have $\Lip_{\widehat W}\left(\widehat f\right)
> \Lip_{\partial \widehat W}\left(\widehat f\right)$.

Indeed, suppose we had $\Lip_{\widehat W}\left(\widehat f\right)
> \Lip_{\partial \widehat W}\left(\widehat f\right)$. Then we could find a path $\widehat \gamma$
within $\widehat W$ connecting points $a,b\in \widehat W$ such that
$$\frac{d_{\widehat X}\left(\widehat f(a), \widehat f(b)\right)}{L(\gamma)} > m$$ for some
$$m > \Lip_{\partial \widehat W}\left(\widehat f\right).$$ Since $d_{\widehat W}\left(\widehat
f(a), \widehat f(\widehat\gamma(s))\right)$ is Lipschitz (hence a.e.\
differentiable) in $s$, we can find an $s_0$ at which its derivative
is greater than $m$.   Thus, for all sufficiently small
$\epsilon_0$, writing $x_0=\gamma(s_0)$ (where $\gamma=M\circ
\widehat \gamma$), we can find points $x_1,x_{-1}\in  W$ such that
 $d_{X}(x_1,x_0)=d_{ X}(x_{-1},x_0)=\e_0$ and $\widetilde
f(x_1), \widetilde f(x_{-1})$ are both at distance greater than $m
\epsilon_0$ from $\widetilde f(x_0)$ and lie in distinct components
of $T \setminus \left\{\widetilde f(x_0)\right\}$.

Now consider some $\epsilon>0$ much smaller than $\epsilon_0$. Since
$\widetilde f$ is an AMLE of $f$, the $T$-comparison property
implies that the function $$g(\cdot) \eqdef d_T\left(\widetilde f(\cdot),
\widetilde f(x_0)\right) $$ satisfies comparison with distance
functions from above.    Moreover, along any near-geodesic from
$x_0$ to $x_1$, the function $g$ increases at an average speed greater than $m$.   Write, as before, $g^\epsilon(x)\eqdef\sup_{B(x,\e)}g$. Let $C$ be the Lipschitz constant of $\widetilde f$, so that
 $|g - g^\epsilon| \le  C\epsilon$.  Note that $g(x_0)=0$ and $g(x_1)>m\e_0$, and therefore $g^\e(x_1)-g^\e(x_0)>m\e_0-2C\e$. By considering a near-geodesic from $x_0$ to $x_1$, this implies that when $\epsilon$ is small enough, we can find points $y_1,y_2\in B(x_0,\e_0)$ with $g^\epsilon(y_2)-g^\epsilon( y_1) >
  m\e$ and $d_T(y_1, y_2) \le \epsilon$, such that $\widetilde f(y_1)$ and $\widetilde f(y_2)$ lie in the same
component of  $T \setminus \left\{\widetilde f(x_0)\right\}$ as $\widetilde f(x_1)$, and both $\widetilde f(y_1)$ and $\widetilde f(y_2)$ are at distance at least $3C\e$ from $\widetilde f(x_0)$.

Fix $\delta>0$.  Applying Lemma~\ref{l.armstrongsmart} inductively
we obtain a sequence of points $\{y_i\}_{i=1}^k$  such that for all
$i\in \{1,\ldots,k-1\}$ we have $d_X(y_i,y_{i+1})\le \e$ and for all
$i\in \{1,\ldots,k-2\}$,
\begin{equation}\label{eq:increase}
g^\e(y_{i+2})-g^\e(y_{i+1})\ge g^\e(y_{i+1})-g^\e(y_i)-\frac{\delta}{2^i}.
\end{equation}
This iterative construction can continue until the first $k$ for which $y_k$ has distance at most $\e$ from $\partial W$.
It follows from~\eqref{eq:increase} that
$$
g^\e(y_{i+1})-g^\e(y_i)\ge g^\epsilon(y_2)-g^\epsilon( y_1)-\sum_{j=1}^\infty \frac{\delta}{2^j}>m\e-\delta.
$$
Thus, assuming $\delta$ is small enough, we have $g^\e(y_{i+1})-g^\e(y_i)>m\e$ for all $i\ge 1$. It follows that for all $j>i\ge 1$ we have
\begin{equation}\label{eq:too big}
g^\e(y_j)>g^\e(y_i)+(j-i)m\e.
\end{equation}
A consequence of~\eqref{eq:too big} is that, since $g^\e$ is bounded, the above construction
cannot continue indefinitely without reaching a point within $\epsilon$ distance from $\partial W$.

Another consequence of~\eqref{eq:too big} and the fact that $\widetilde f$ is $C$-Lipschitz is that for all $j>i$,
 \begin{equation}\label{eq:17}
 g(y_j)>g(y_i)+(j-i)m\e-2C\e.
  \end{equation} In particular, since $g(y_1)\ge 3C\e$, it follows from~\eqref{eq:17} that for all $i\ge 1$ we have $g(y_i)\ge C\e$. This implies that the points $\left\{\widetilde f(y_i)\right\}_{i=1}^\infty$ are all in the same
component of  $T \setminus \left\{\widetilde f(x_0)\right\}$ as $\widetilde f(x_1)$, since otherwise if $i$ is the first index such that $\widetilde f(y_i)$ is not in this component, then $d_T\left(\widetilde f(y_{i}),\widetilde f(y_{i-1})\right)\ge 2C\e$, contradicting the fact that $\widetilde f$ is $C$-Lipschitz.

 We similarly construct the sequence
$\{z_i\}_{i=1}^\ell$, starting with the point $x_{-1}$ instead of the point $x_{1}$, such that
for all $j>i$,
 \begin{equation*}\label{eq:18}
 g(z_j)>g(z_i)+(j-i)m\e-2C\e.
  \end{equation*}
As before, the entire sequence $\left\{\widetilde f(z_i )\right\}_{i=1}^\ell$ must remain in the
same component of  $T \setminus \left\{\widetilde f(x_0)\right\}$ as $f(x_{-1})$ and $z_\ell$ is within $\epsilon$ distance from $\partial W$.

Now consider some $M$ pre-image $\widehat x_0$ of $x_0$ that is
contained in $\widehat W$, and let $D$ denote the the distance from
$\widehat x_0$ to $\partial \widehat W$.  Among the rectifiable paths in $W$ from one boundary point of $W$ to another
that pass through all the $z_k$ in reverse order and the
subsequently the $y_k$ in order, let $\gamma_0$ be one which is near the shortest.   Take any path
$\widehat \gamma_0$ through $\widehat x_0$ such that $M \circ
\widehat \gamma_0=\gamma_0$. Then consider a maximal arc of
this path contained in $\widehat W$ and containing $\widehat x_0$ (which necessarily has length at
least $D$ and connects two points on $\partial \widehat W$).  The
length of this maximal arc is at least $D$ and the change in
$\widehat f$ from one endpoint of the arc to the other is (in
distance) at least $m$ times the length of the arc, plus an
$O(\epsilon_0)$ error, which contradicts the definition of $m$.
\end{proof}

\begin{lemma} \label{l.uniquenesshalf}
 Given a length space $(X,d_X)$, a closed $Y\subseteq X$, a metric tree $(T,d_T)$, and a
 Lipschitz function
 $f:Y \to T$, there exists at most one $\widetilde f: X\to T$ which
 is an AMLE of $f$.
\end{lemma}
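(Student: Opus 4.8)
The plan is to reduce the problem to maps out of an $\R$-tree, where the extra topological rigidity compensates for the missing order structure of $\R$, and then to exploit the fact that away from its finitely many branch points the target tree $T$ is one-dimensional. Suppose $u,v:X\to T$ are both AMLEs of $f$; since they agree on $Y$ by definition, it suffices to fix an arbitrary $x_0\in X\setminus Y$ and prove $u(x_0)=v(x_0)$. Form the covering space $\widehat X$ of finite-length paths based at $x_0$ as in Lemma~\ref{l.coveringreduction}. By that lemma $\widehat u\eqdef u\circ M$ and $\widehat v\eqdef v\circ M$ are both AMLEs of $f\circ M$ on the $\R$-tree $\widehat X$, and since the covering map $M$ sends the constant path $\widehat x_0$ to $x_0$ we have $\widehat u(\widehat x_0)=u(x_0)$ and $\widehat v(\widehat x_0)=v(x_0)$. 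Thus the whole matter is reduced to showing that two AMLEs on an $\R$-tree that agree on $\widehat Y\eqdef M^{-1}(Y)$ agree at the root $\widehat x_0$.

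The engine of the argument is the interplay between the tree-valued and real-valued comparison theories. For each reference point $t\in T$, the $T$-comparison property of an AMLE (Proposition~\ref{p.treecomparison}) makes $x\mapsto d_T(t,\widehat u(x))$ and $x\mapsto d_T(t,\widehat v(x))$ satisfy comparison with distance functions from above; being bounded and continuous, each is then at most the unique real-valued AMLE $\Phi_t$ of the common boundary data $x\mapsto d_T(t,f(M(x)))$, by the result of \cite{PSSW09} recorded in Lemma~\ref{l.compabovelocal}. The key local observation is the following \textbf{segment mechanism}: on any region where $\widehat u$ takes values inside a single edge $[a,b]\subseteq T$ (a subset isometric to an interval whose interior contains no vertex of $T$), the identity $d_T(b,\widehat u(\cdot))=d_T(a,b)-d_T(a,\widehat u(\cdot))$ converts comparison from above for the reference point $b$ into comparison from \emph{below} for the coordinate $x\mapsto d_T(a,\widehat u(x))$. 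Hence this coordinate satisfies comparison with distance functions from both sides, so by Proposition~\ref{prop:same for line} and Proposition~\ref{AMiffCDF} it is a genuine real-valued AMLE and is therefore pinned down by its boundary values. Consequently, on any region where $\widehat u$ and $\widehat v$ both take values in a \emph{common} edge of $T$ and agree on the boundary of that region, they must coincide there.

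It remains to organize these local rigidity statements into a global one. Let $A\eqdef\{x\in\widehat X:\ \widehat u(x)=\widehat v(x)\}$, a closed set containing $\widehat Y$, and suppose for contradiction that the root lies in a component $\Omega$ of $\widehat X\setminus A$; on $\partial\Omega\subseteq A$ the two maps agree. Using the finiteness of the vertex set $V$ of $T$, I would decompose $\Omega$ according to the preimages $\widehat u^{-1}(V)$ and $\widehat v^{-1}(V)$ into subregions on which each map lies in a single edge of $T$, and propagate agreement inward from $\partial\Omega$ through these subregions via the segment mechanism, using the unique-arc structure of the $\R$-tree $\widehat X$ to ensure that a disagreement cannot be ``routed around'' a branch of the domain.

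The heart of the proof --- and the step I expect to be hardest --- is justifying this propagation across branch points of $T$, where the one-dimensional coordinate description breaks down. There one must rule out that $\widehat u$ and $\widehat v$ peel off into different branches of $T$ while keeping all their distance functions $d_T(t,\cdot)$ below the common bounds $\Phi_t$; ruling this out is what lets agreement cross the branch and ultimately forces $\Omega\subseteq A$, the desired contradiction. I would handle it exactly as in the converse direction of Proposition~\ref{p.treecomparison}: choosing $t$ to be the offending branch vertex, I would use the ``both images lie further from $t$ than the vertex'' dichotomy together with the infinitesimal monotonicity estimate of Lemma~\ref{l.armstrongsmart}, applied to $x\mapsto d_T(t,\widehat u(x))$, to produce a violation of $T$-comparison, contradicting that $\widehat u$ is an AMLE. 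Since local compactness is not assumed in this half of the theorem, all suprema would be taken over open balls, precisely as in Lemma~\ref{l.armstrongsmart}.
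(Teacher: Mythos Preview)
Your reduction to an $\R$-tree via Lemma~\ref{l.coveringreduction} and your appeal to the $T$-comparison characterization (Proposition~\ref{p.treecomparison}) match the paper exactly, and your ``segment mechanism'' is correct as stated: on any open set where an AMLE takes values in a single edge of $T$, the edge coordinate is a real-valued AMLE of its boundary data. But the global argument you build on top of this has a genuine gap.

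The difficulty is not merely at branch points of $T$, as you anticipate; it is that your propagation never gets started. On a component $\Omega$ of $\{\widehat u\neq\widehat v\}$, the common boundary value $\widehat u|_{\partial\Omega}=\widehat v|_{\partial\Omega}$ may sit at (or pass through) a vertex of $T$, and then $\widehat u$ and $\widehat v$ can immediately enter \emph{different} edges of $T$ as one moves into $\Omega$. In that situation there is no subregion $W\subseteq\Omega$, however small, on which both maps lie in a common edge and on whose full boundary they agree, so the segment mechanism yields nothing. Your proposed remedy---mimicking the converse direction of Proposition~\ref{p.treecomparison} with $t$ the offending vertex and invoking Lemma~\ref{l.armstrongsmart}---is aimed at producing a violation of $T$-comparison for a single map, but here both $\widehat u$ and $\widehat v$ already satisfy $T$-comparison; what you need is a comparison between the \emph{two} maps, and that argument does not supply one. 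The inequalities $d_T(t,\widehat u(\cdot))\le\Phi_t$ and $d_T(t,\widehat v(\cdot))\le\Phi_t$ that you record earlier are one-sided and cannot by themselves force $\widehat u=\widehat v$.

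The paper resolves exactly this issue with a construction you are missing. Rather than working edge by edge, it builds \emph{global} real-valued functions $a_1,a_2$ on $\widehat X\setminus\widehat Y$ as follows: for a path $p$ from a base point to $x$, the geodesic in $T$ between $\widehat u(p(s))$ and $\widehat v(p(s))$ is ``developed'' isometrically onto a moving interval $(a_1(s),a_2(s))\subseteq\R$, with the rule that a point $t$ lying on the geodesic is glued to the same real number for as long as it remains on the geodesic. One shows this developing map depends only on the homotopy class of $p$, and simple connectivity of the $\R$-tree $\widehat X\setminus\widehat Y$ then makes $a_1,a_2$ well-defined functions of $x$ with $a_2-a_1=d_T(\widehat u,\widehat v)$. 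Locally each $a_i$ is an affine function of some $d_T(t,\cdot)$, so $a_1$ satisfies comparison from below and $a_2$ from above on small balls; Lemma~\ref{l.compabovelocal} and the maximum principle for differences of real-valued AMLEs then squeeze $a_2-a_1$ to zero. This global ``unrolling'' of the pair $(\widehat u,\widehat v)$ into $\R$ is precisely what replaces your unproven propagation step, and it is the main idea of the proof.
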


\begin{proof}By Proposition~\ref{p.treecomparison} we must show that if $g,h:X\to T$ are continuous functions
such that $g=h$ on $Y$, and $g,h$ both satisfy $T$-comparison on $X
\setminus Y$, then $g = h$ throughout $X$. By Lemma \ref{l.coveringreduction}, it is enough to prove this in the case that $X$ is an $\mathbb R$-tree: in particular, we may assume that $X$ and $X \setminus Y$ are simply connected (note that the connected components of an open subset of a metric tree are simply connected).

For the sake of obtaining
a contradiction, suppose that $g,h$ satisfy these hypotheses, but
$g(x) \not = h(x)$ for some $x \in X \setminus Y$. Then the
hypotheses still hold if we replace $Y$ with the complement of the
connected component of $\{y\in X:\  g(y) \not = h(y) \}$ containing
$x$. In other words, we lose no generality in assuming $g(x) \not =
h(x)$ for all $x \in X \setminus Y$.  The pair $(g(\cdot),h(\cdot))$
may then be viewed as a map from $X \setminus Y$ to $$\mathcal T
\eqdef \{(t_1,t_2) \in T \times T :\  t_1 \not = t_2 \}.$$ We will
use this map to define a certain pair of real-valued functions on
$X$.

To this end, consider an arbitrary continuous function $P: [0,1] \to
\mathcal T$. Define functions $t_1,t_2:[0,1] \to T$ by writing
$(t_1(s), t_2(s)) = P(s)$.  Let $I((t_1,t_2))$ denote the geodesic
joining $t_1$ and $t_2$ in $T$.  For every $s \in [0,1]$, we will
define an isometry $\Psi^P_s$ from $I(P(s))$ to an interval
$(a_1(s), a_2(s))$ of $\mathbb R$, sending $t_1$ to $a_1(s)$ and
$t_2$ to $a_2(s)$. Clearly, the values of $a_1(s)$ and $a_2(s)$
determine the isometry, and for each $s$, we must have $$a_2(s) -
a_1(s) = d_T(t_1(s), t_2(s)).$$  However, the latter observation
only determines $a_1(s)$ and $a_2(s)$ up to the addition of a single
constant to both values. This constant is determined by the
following requirements:
 \begin{enumerate}
 \item $a_1(0) = 0$.
 \item For each fixed $t\in T$, the function $s\mapsto \Psi^P_s(t)$ is constant on every connected interval of the (open)
 set $\{s\in [0,1]:\  t \in I(P(s)) \}$.
 \end{enumerate}
Informally, at each time $s$, the geodesic $I(P(s))$ is ``glued''
isometrically to the interval $(a_1(s),a_2(s)) \subseteq \R$.  As
$s$ increases, if $t_1$ and $t_2$ move closer to each other, then
points are being removed from the ends of $I(P(s))$, and these
points are ``unglued'' from $\R$.  As $t_1$ and $t_2$ move further
from each other, new points are added to the geodesic and these new
points are glued back onto $\R$.

 When $P'$ and $P$ are paths in
$\mathcal T$ as above, write $P\sim P'$ if $P(0) = P'(0)$ and $P(1)
= P'(1)$ and for each $s \in [0,1]$ we have $I(P(s)) \cap I(P'(s))
\not = \emptyset.$ We claim that in this case $\Psi^P_1 =
\Psi^{P'}_1$. Note that given $s$ and $t \in I(P(s)) \cap I(P'(s))$,
 requirement $(2)$ above implies that $\Psi^P_s$ and $\Psi^{P'}_s$
agree up to additive constant on a neighborhood of $s$; namely, they
must agree on the connected component of $\{s':\  t \in I(P(s)) \cap
I(P'(s)) \}$ containing $s$.  Since $\Psi^P_s$ and $\Psi^{P'}_s$
agree up to additive constant on an open neighborhood of every $s
\in [0,1]$, they must be equal up to additive constant throughout
the interval, and requirement $(1)$ above implies that this constant
is zero.

A corollary of this discussion is that $\Psi^P_1 = \Psi^{P'}_1$
whenever $P$ and $P'$ are homotopically equivalent paths in
$\mathcal T$.  To obtain this, it is enough to observe that if $P^r$
is a homotopy, with $r \in [0,1]$ and $P=P^0, P'=P^1$, then for each
$r \in [0,1]$ we have $P^r \sim P^{r'}$ for all $r'$ in some
neighborhood of $r$. This implies that $\Psi^{P^r}_1$ (as a function
of $r$) is constant on a neighborhood of each point in $[0,1]$,
hence constant throughout $[0,1]$.

Since we are assuming $X \setminus Y$ is simply connected, we can
fix a point $x_0 \in X \setminus Y$ and define the pair $(a_1(x),
a_2(x))$ to be the value $(a_1(1), a_2(1))$ obtained above by taking
$P(s) = (g(p(s)), h(p(s)))$ where $p:[0,1] \to X$ is any path from
$x_0$ to $x$.  For $y$ in some neighborhood of each $x \in X
\setminus Y$, and for some $t \in T$, we have that $a_1(y)$ is an
affine function of $d_T(t, f(y))$.  For this, it suffices to take a
neighborhood and a $t$ such that $t \in I(P(s))$ throughout that
neighborhood and use requirement $(2)$ above. In this neighborhood,
$T$-comparison implies that $a_1$ satisfies comparison with distance
functions from below and $a_2$ satisfies comparison with distance
functions from above.


Now let $C$ be the Lipschitz constant of $f$, and for $\delta>0$
write $$ U_\delta \eqdef \left\{ x:\  a_1(x) < a_2(x) + 2 C \delta
\right\}.
 $$
We claim that the argument in the paragraph just above implies that
$a_2$ satisfies comparison with distance functions from above on
$B(x,\delta)$ for any $x \in U_\delta$.  To see this, first observe
that the fact that $a_2$ is Lipschitz with constant at most $C$
implies that $B(x,\delta) \subseteq X \setminus Y$, and then take
$t$ to be the midpoint in $T$ between $g(x)$ and $h(x)$, noting that
since $g$ and $h$ are both Lipschitz with constant $C$, we have that
$t$ is on the geodesic between $g(x')$ and $h(x')$ for all $x' \in
B(x,\delta)$.

We may now apply Lemma \ref{l.compabovelocal}  (where the $Y$ of the
lemma statement is chosen so that $U_\delta = X \setminus Y$) to see
that $a_2 \leq \widetilde a_2$ on $U_\delta$, where $\widetilde a_2$
is the AMLE of the restriction of $a_2$ to $\partial U_\delta$. By
symmetry, we may apply the same arguments to $-a_1$ to obtain that
$a_1 \geq \widetilde a_1$ on $U_\delta$.  Since $a_1 < a_2$ by
construction, we now have

\begin{equation} \label{e.sandwiching} \widetilde a_1 \leq a_1 < a_2 \leq \widetilde a_2.\end{equation}

Now it is a standard fact (see~\cite{ACJ04}) about AMLE that the
suprema and infima of a difference of AMLEs (in this case
$\widetilde a_2 - \widetilde a_1$) is obtained on the boundary set
(in this case $\partial U_\delta$). This implies that $\widetilde
a_1 \leq \widetilde a_2 \leq \widetilde a_1 + 2C\delta$ throughout
the set $U_\delta$.  By \eqref{e.sandwiching}, we now have that
$$\sup_{x \in U_\delta} |a_1(x) - a_2(x)| \leq 2C\delta$$  This
implies $\sup_{x \in X} |a_1(x) - a_2(x)| \leq 2C \delta$, and since
this holds for all $\delta > 0$, we have $a_1 = a_2$ throughout $X$,
a contradiction.
\end{proof}

\section{Proofs of Theorem~\ref{thm:local-global} and Theorem~\ref{thm:aronszajn}} \label{s.infharmAMLE}

We first present the simple proof of Theorem~\ref{thm:local-global},
i.e, the local-global result for tree-valued $\infty$-harmonic
functions. The proof is a modification of an argument
in~\cite{PSSW09} from the setting of real-valued mappings to the
setting of tree-valued mappings.

\begin{proof}[Proof of Theorem~\ref{thm:local-global}] Let $U\subseteq G\setminus \Omega$ be an open subset of $G$. Denote
\begin{equation}\label{eq:def lip const closure}
L = \mathrm{Lip}_{\overline U} (f) = \max_{\substack{x,y \in \overline{U}\\x\neq y}} \frac{d_T(f(x),f(y))}{d_G(x,y)}.
\end{equation}
Let $x,y\in \overline U$ be points at which the maximum in~\eqref{eq:def lip const closure} is attained and $d_G(x,y)$ is maximal among all such points. We will be done if we show that $x,y\in \partial U$. Assume for the sake of contradiction that $x\in U$ (the case $y \in U$ being similar).

If $x$ is in the interior of an edge of $G$, we could move $x$ slightly along the edge and increase $d_G(x,y)$ without decreasing $d_T(f(x),f(y))/d_G(x,y)$. So, assume that $x\in V$. The fact that $f$ is $\infty$-harmonic on $V\setminus \Omega\supseteq \{x\}$ means that there exist  $u,v\in N_G(x)$ with $d_T(f(u),f(v))=2L$ and $f(x)$ is a midpoint between $f(u)$ and $f(v)$ in $T$. Since $x\in U$ (and $f$ is linear on the edges of $G$) there exists $\e>0$ and $z_u,z_v\in U$ such that $z_u\in xu\in E$, $z_v\in xv\in E$, $d_G(x,z_u)=d_G(x,z_v)=\e$ and $f(z_u)$ (resp. $f(z_v)$) is the point on the geodesic in $T$ joining $f(x)$ and $f(u)$  (resp. $f(v)$) at distance $L\e$ from $f(x)$. Because $T$ is a metric tree, either $d_T(f(x),f(z_u))=d_T(f(x),f(y))+L\e$ or $d_T(f(x),f(z_v))=d_T(f(x),f(y))+L\e$. Assume without loss of generality that $d_T(f(x),f(z_u))=d_T(f(x),f(y))+L\e=L(d_G(x,y)+\e)$. Then $d_G(z_u,y)=d_G(x,y)+\e$ and $d_T(f(z_u),f(y))/d_G(z_u,y)=L$, contradicting the maximality of $d_G(x,y)$.
\end{proof}

 As in the classical proof that metric convexity and the binary
intersection property implies the isometric extension property (see
~\cite[Prop.~1.4]{BL00}), for the proof of
Theorem~\ref{thm:aronszajn} we shall construct $\widetilde f$ by
extending to one additional point at a time. The proof of
Theorem~\ref{thm:aronszajn} relies on a specific choice of the
ordering of the points for the purpose of such a point-by-point
construction. Our argument uses a variant of an algorithm from~\cite{LLPSU99}.

\begin{proof}[Proof of Theorem~\ref{thm:aronszajn}] Write $|V\setminus \Omega|=n$. We shall construct inductively a special ordering $w_1,\ldots,w_n$ of the points of $V\setminus \Omega$, and extend $f$ to these points one by one according to this ordering. Assume that  $w_1,\ldots,w_k$ have been defined, as well as the values $\widetilde f(w_1),\ldots,\widetilde f(w_k)\in Z$ (if $k=0$ this assumption is vacuous).

Write $\Omega_0=\Omega$ and $\Omega_k=\Omega\cup\{w_1,\ldots,w_k\}$. Given distinct $x,y\in V$ we shall say that $x_0,x_1,\ldots x_\ell\in V$ is a path joining $x$ and $y$ which is external to $\Omega_k$ if $x_0=x$, $x_\ell=y$, and for all $i\in\{0,\ldots,\ell-1\}$ we have $x_ix_{i+1}\in E$ and $\{x_i,x_{i+1}\}\not \subseteq \Omega_k$. Let $d_k(x,y)$ be the minimum over $\ell\in \N$ such that there exists a path $x_0,x_1,\ldots x_\ell\in V$ joining $x$ and $y$ which is external to $\Omega_k$. If no such path exists we set $d_k(x,y)=\infty$. We also set $d_k(x,x)=0$ for all $x\in V$. Then $d_k:V\times V \to \{0\}\cup\N\cup\{\infty\}$ clearly satisfies the triangle inequality and $d_k(\cdot,\cdot)\ge d_G(\cdot,\cdot)$ pointwise.

We distinguish between two cases:


\noindent{\bf Case 1.} For all distinct $x,y\in \Omega_k$ we have $d_k(x,y)=\infty$. In this case order the points of $V\setminus \Omega_k$ arbitrarily, i.e., $V\setminus \Omega_k=\{w_{k+1},\ldots,w_n\}$. If $w\in \{w_{k+1},\ldots,w_n\}$ then by the connectedness of $G$, there exists a path in $G$ joining $w$ and some point $x_w\in \Omega_k$. Note that $x_w$ is uniquely determined by $w$, since if there were another path joining $w$ and some point $y_w\in \Omega_k$ which isn't $x_w$ then $d_k(x_w,y_w)<\infty$, contradicting our assumption in Case 1. We can therefore define in this case $\widetilde f(w)=\widetilde f (x_w)$.

\medskip

\noindent{\bf Case 2.} for some distinct $x,y\in \Omega_k$ we have $d_k(x,y)<\infty$. In this case define
\begin{equation}\label{eq:def Lk}
L_k=\max_{\substack{x,y\in \Omega_k\\x\neq y}}\frac{d_Z\left(\widetilde f(x),\widetilde f(y)\right)}{d_k(x,y)}.
\end{equation}
Our assumption implies that $L_k>0$. Choose $x,y\in \Omega_k$ that are distinct and satisfy $L_kd_k(x,y)=d_Z\left(\widetilde f(x),\widetilde f(y)\right)$. Write $\ell=d_k(x,y)$ and let $x_0,x_1,\ldots x_\ell\in V$ be a path joining $x$ and $y$ which is external to $\Omega_k$. Then $x_1\notin \Omega_k$, so we may define $w_{k+1}=x_1$. We claim that
\begin{equation}\label{eq:intersection}
\bigcap_{a\in \Omega_k}B_Z\left(\widetilde f(a),L_kd_k(a,w_{k+1})\right)\neq \emptyset.
\end{equation}
To prove~\eqref{eq:intersection}, by the fact that $Z$ has the binary intersection property, it suffices to show that for all $a,b\in \Omega_k$ we have
\begin{equation}\label{eq:2 intersection}
B_Z\left(\widetilde f(a),L_kd_k(a,w_{k+1})\right)\cap B_Z\left(\widetilde f(b),L_kd_k(b,w_{k+1})\right)\neq \emptyset.
\end{equation}
If either $d_k\left(a,w_{k+1}\right)=\infty$ or $d_k\left(b,w_{k+1}\right)=\infty$ then~\eqref{eq:2 intersection} is trivial. Assume therefore that  $d_k\left(a,w_{k+1}\right)$ and $d_k\left(b,w_{k+1}\right)$ are finite. Define $\lambda\in [0,1]$ by
\begin{equation}\label{eq;def lambda}
\lambda=\frac{d_k\left(a,w_{k+1}\right)}{d_k\left(a,w_{k+1}\right)+d_k\left(a,w_{k+1}\right)}.
\end{equation}
Since $Z$ is metrically convex, there exists a point $z\in Z$ such that
\begin{equation}\label{eq:mc}
d_Z\left(z,\widetilde f(a)\right)=\lambda d_Z\left(\widetilde f(a),\widetilde f(b)\right)\quad\mathrm{and}\quad d_Z\left(z,\widetilde f(b)\right)=(1-\lambda) d_Z\left(\widetilde f(a),\widetilde f(b)\right).
\end{equation}
The definition of $L_k$ implies \begin{equation}\label{eq:use L_k}
d_Z\left(\widetilde f(a),\widetilde f(b)\right)\le L_kd_k(a,b)\le L_k\big(d_k(a,w_{k+1})+d_k(b,w_{k+1})\big).
 \end{equation} Using~\eqref{eq;def lambda} and~\eqref{eq:mc}, we deduce from~\eqref{eq:use L_k} that $$d_Z\left(z,\widetilde f(a)\right)\le L_kd_k(a,w_{k+1})\quad\mathrm{and}\quad d_Z\left(z,\widetilde f(b)\right)\le L_kd_k(b,w_{k+1}),
  $$proving~\eqref{eq:2 intersection}. Having proved~\eqref{eq:intersection}, we let $\widetilde f(w_{k+1})$ be an arbitrary point satisfying
  \begin{equation}\label{eq:in intersection}
  \widetilde f(w_{k+1})\in \bigcap_{a\in \Omega_k}B_Z\left(\widetilde f(a),L_kd_k(a,w_{k+1})\right).
  \end{equation}

  The above inductive construction produces a function $\widetilde f:V\to Z$ that extends $f$. We claim that $\widetilde f$ is $\infty$-harmonic on $V\setminus \Omega$. To see this note that for all $x,y\in V$ the sequence $\{d_k(x,y)\}_{k=1}^n\subseteq \{0\}\cup\N\cup\{\infty\}$ is non-decreasing. We shall next show that the sequence $\{L_k\}_{k=1}^n$, defined in~\eqref{eq:def Lk}, is non-increasing. Indeed, assume that $L_{k+1}>0$ and take distinct $a,b\in \Omega_{k+1}$ such that $L_{k+1}d_{k+1}(a,b)=d_Z\left(\widetilde f(a),\widetilde f(b)\right)$. If $a,b\in \Omega_k$ then it follows from the definition of $L_k$ that $L_{k+1}\le L_k$, since $d_{k+1}(a,b)\ge d_k(a,b)$. By symmetry, it remains to deal with the case $a\in \Omega_k$ and $b=w_{k=1}$. In this case, since by our construction we have $\widetilde f(w_{k+1})\in B_Z\left(\widetilde f(a),L_kd_k(a,w_{k+1})\right)\subseteq B_Z\left(\widetilde f(a),L_{k}d_{k+1}(a,w_{k+1})\right)$, it follows once more that $L_{k+1}\le L_k$.

Fix $k\in \{0,\ldots,n-1\}$. If $\widetilde f(w_{k+1})$ was defined in Case 1 of our inductive construction, then $\widetilde f$ is constant on $N_G(w_{k+1})\cup\{w_{k+1}\}$, in which fact the $\infty$-harmonic conditions~\eqref{eq:first infty}, \eqref{eq:second infty} for $\widetilde f$ at $w_{k+1}$ hold trivially. If, on the other hand, $\widetilde f(w_{k+1})$ was defined in Case 2 of our inductive construction, then there exist distinct $x,y\in \Omega_k$ with $L_kd_k(x,y)=d_Z\left(\widetilde f(x),\widetilde f(y)\right)$, such that for $\ell=d_k(x,y)$ there are $x_0,x_1,\ldots x_\ell\in V$ which form a path joining $x$ and $y$ which is external to $\Omega_k$, and $x_1=w_{k+1}$. For every $i\in \{1,\ldots,\ell-1\}$ either $x_i\notin \Omega_k$ or $x_{i+1}\notin \Omega_{k}$, and therefore at least one of the values $\widetilde f(x_i),\widetilde f(x_{i+1})$ was define after stage $k+1$ of our inductive construction. This means that for some $j\ge k$ we have $|\Omega_j\cap\{x_i,x_{i+1}\}|=1$ and
\begin{equation}\label{eq:increment}
d_Z\left(\widetilde f(x_i),\widetilde f(x_{i+1})\right)\le L_jd_j(x_i,x_{i+1})=L_j\le L_k,
\end{equation}
where we used the fact that $x_ix_{i+1}\in E$, and therefore, since $|\Omega_j\cap\{x_i,x_{i+1}\}|=1$, the path $x_i,x_{i+1}$ is external to $\Omega_j$. Thus
\begin{eqnarray}\label{eq:constant increments}
\nonumber L_k\ell&=&d_Z\left(\widetilde f(x),\widetilde f(y)\right)\\\nonumber&\le& d_Z\left(\widetilde f(x_0),\widetilde f(x_2)\right)+ d_Z\left(\widetilde f(x_2),\widetilde f(x_\ell)\right)\\\nonumber &\le& d_Z\left(\widetilde f(x_0),\widetilde f(x_2)\right)+ \sum_{i=2}^{\ell-1}d_Z\left(\widetilde f(x_i),\widetilde f(x_{i+1})\right)\\&\stackrel{\eqref{eq:increment}}{\le}&  d_Z\left(\widetilde f(x_0),\widetilde f(x_1)\right)+ d_Z\left(\widetilde f(x_1),\widetilde f(x_2)\right)+ L_k(\ell-2)\nonumber\\&\stackrel{\eqref{eq:increment}}{\le}& L_k\ell.
\end{eqnarray}
It follows that all the inequalities in~\eqref{eq:constant increments} actually hold as equality. Therefore we have $d_Z\left(\widetilde f(x),\widetilde f(w_{k+1})\right)= d_Z\left(\widetilde f(w_{k+1}),\widetilde f(x_{2})\right)=L_k$ and $d_Z\left(\widetilde f(x),\widetilde f(x_{2})\right)=2L_k$. Since by construction $x,x_2\in N_G(w_{k+1})$, in order to show that $\widetilde f$ is $\infty$-harmonic at $w_{k+1}$ it remains to check that for all $u\in N_G(w_{k+1})$ we have $d_Z\left(\widetilde f(u),\widetilde f(w_{k+1})\right)\le L_k$. But, our construction ensures that for some $j\ge k$ we have $d_Z\left(\widetilde f(u),\widetilde f(w_{k+1})\right)\le L_jd_j(u,w_{k+1})=L_j$ (using $uw_{k+1}\in E$), and the required result follows since $L_j\le L_k$.
\end{proof}

\section{Existence} \label{s.existence}

Here we prove the existence part of Theorem~\ref{thm:tree main},
i.e., we establish the following:

\begin{theorem}\label{thm:exist ext}
Let $(X,d_X)$ be a locally compact length space and $(T,d_T)$ a
metric tree. Then for every closed $Y\subseteq X$, every Lipschitz
mapping $f:Y\to T$ admits an AMLE.
\end{theorem}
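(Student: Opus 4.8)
The plan is to deduce the existence of an AMLE for $f:Y\to T$ by approximating the (locally compact) length space $X$ by finite graphs, applying the finite-graph $\infty$-harmonic extension theorem (Theorem~\ref{thm:aronszajn}) on each approximation, and then extracting a limiting extension via a compactness argument; the $T$-comparison characterization (Proposition~\ref{p.treecomparison}) will be used to verify that the limit is genuinely an AMLE. The key observation that makes the approximation work is the local-global principle (Theorem~\ref{thm:local-global}): an $\infty$-harmonic function on a finite triangle-free graph linearly interpolates to an AMLE, so each finite-graph solution produced by Theorem~\ref{thm:aronszajn} is itself an AMLE on the corresponding discretization.

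First I would reduce to the bounded case: since $T$ is a bounded metric tree, $\Lip_Y(f)<\infty$ and the candidate extension will have Lipschitz constant $\Lip_Y(f)=:C$, so all extensions under consideration take values in a fixed bounded set. Next I would construct, for each $\delta>0$, a $\delta$-net $N_\delta$ of the relevant compact (by local compactness, after localizing) region of $X$, and build an unweighted finite graph $G_\delta$ whose vertices are $N_\delta\cup(\text{a net of }Y)$, joining nearby net points by edges, subdividing edges so that $G_\delta$ is triangle-free, and scaling edge lengths so that the graph metric $d_{G_\delta}$ approximates $d_X$ up to $O(\delta)$ on the net. Applying Theorem~\ref{thm:aronszajn} to $f$ restricted to the boundary net yields an $\infty$-harmonic extension $\widetilde f_\delta$ with $\Lip(\widetilde f_\delta)=C+o(1)$, which by Theorem~\ref{thm:local-global} linearly interpolates to an AMLE on $G_\delta$ viewed as a length space.

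Then I would pass to the limit as $\delta\to 0$. Because all the $\widetilde f_\delta$ are uniformly Lipschitz (constant $\approx C$) and take values in a fixed compact subtree of $T$, and because $X$ is locally compact, the Arzel\`a--Ascoli theorem gives a subsequence converging locally uniformly to a limit $\widetilde f:X\to T$ extending $f$, with $\Lip_X(\widetilde f)\le C$. The final and most delicate step is to verify that this $\widetilde f$ satisfies $T$-comparison on $X\setminus Y$, so that Proposition~\ref{p.treecomparison} certifies it as an AMLE. I would do this by showing that the comparison-with-distance-functions inequality, which holds for each $\widetilde f_\delta$ (being AMLE on $G_\delta$, hence satisfying $T$-comparison there by Proposition~\ref{p.treecomparison} applied to the discretized length space), is stable under the local-uniform limit: if $x\mapsto d_T(t,\widetilde f(x))$ violated comparison with distance functions from above on some ball, the violation would be strict and would persist on the approximating graphs for small $\delta$, a contradiction.

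The hard part will be the stability of the $T$-comparison property under the discretization-and-limit process. Matching the graph metric $d_{G_\delta}$ to the intrinsic length metric $d_X$ with controlled error requires care when $X$ has nontrivial local geometry, and one must ensure the approximating AMLEs do not degenerate (e.g.\ that the Lipschitz constants genuinely converge to $C$ rather than dropping). A clean way to organize the limiting argument is to phrase everything in terms of the comparison inequality~\eqref{e.distancecomparison} for the scalar functions $x\mapsto d_T(t,\widetilde f(x))$, exploiting that this is a \emph{closed} condition under local uniform convergence (via Remark~\ref{r.boundaryw}, which lets one assume strict inequalities $b>0$ and avoid boundary pathologies), and then invoking Proposition~\ref{p.treecomparison} once at the very end.
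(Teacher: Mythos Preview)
Your proposal is correct and follows essentially the same approach as the paper: discretize $X$ by a finite graph, apply Theorem~\ref{thm:aronszajn} to obtain an $\infty$-harmonic extension, invoke Theorem~\ref{thm:local-global} so that this extension is an AMLE on the graph, extract a locally uniform limit via Arzel\`a--Ascoli, and verify $T$-comparison for the limit using Proposition~\ref{p.treecomparison} together with Remark~\ref{r.boundaryw} to handle the locally compact (non-compact) case. Two small points of difference: the paper uses a two-scale construction (an $\varepsilon$-net with edges at threshold $\sqrt{\varepsilon}$) to obtain the quantitative metric approximation~\eqref{e.metricapprox}, which is what drives the uniform Lipschitz bound you assert; and the triangle-free subdivision you mention is unnecessary, since Theorem~\ref{thm:local-global} holds for arbitrary finite graphs when the target is a metric tree.
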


\begin{proof} Assume first that $X$ is compact. We will construct an AMLE $\widetilde f$ of $f$ as a limit of discrete approximations.

For each $\e \in (0,1/4)$, let $\Lambda_\e$ be a finite subset of $X$ such that
\begin{equation}\label{eq:lambda}
X \subseteq \bigcup_{x \in \Lambda_\e} B_X(x,\e)\quad \mathrm{and}\quad Y \subseteq \bigcup_{y \in \Lambda_\e \cap Y} B_X(y,\e).
\end{equation}
Let $G_\e$ be the graph whose vertices are the elements of $\Lambda_\e$, with $x,y\in\Lambda_\e$ adjacent when $d_X(x,y) \leq \sqrt{\e}$.

For any $x$ and $y$ in $\Lambda_\e$, we can find an arbitrarily-close-to-minimal length path between them and a sequence of points $x=x_0, x_1, x_2, \ldots, x_k = y$ spaced at intervals of $\sqrt{\e} - 2 \e$ along the path, where $k-1$ is the integer part of $d_X(x,y)/(\sqrt{\e} - 2 \e)$, and can then find points $\tilde x_i \in B(x_i,\e) \cap \Lambda_\e$.  Since $d(x_i,x_{i+1}) \leq \sqrt{\e}$ we conclude that $d_{G_\e}(x,y) \leq k$.  It is also clear that $d_{G_\e}(x,y) \geq d(x,y)/\sqrt{\e}$.
Hence,
\begin{equation}\label{e.metricapprox} \left| d_{G_\e}(x,y)\sqrt{\e} - d_X(x,y) \right| \leq C\sqrt{\e},\end{equation} where $C$ depends only on the diameter of $X$.

Let $\widetilde f_\e$ be an $\infty$-harmonic extension of $f|_{Y\cap \Lambda_\e}$  to all of $G_\e$, the existence of which is due to Theorem~\ref{thm:aronszajn} (since $T$ is a $1$-absolute Lipschitz retract).  Note that on $\Lambda_\e$ we have the point-wise inequality $d_X(\cdot,\cdot)\le \sqrt{\e} d_{G_\e}(\cdot,\cdot)$. It follows that the Lipschitz constant of $f|_{Y\cap \Lambda_\e}$ with respect to the metric $\sqrt{\e}d_{G_\e}$ is bounded above by $\Lip_Y(f)$, and hence the Lipschitz constant of $\widetilde f_\e$ with respect to the metric $\sqrt{\e}d_{G_\e}$ is also bounded above by $\Lip_Y(f)$.

Let $\mathcal N_\e\subseteq \Lambda_\e$ be a $\sqrt{\e}$-net in $(\Lambda_\e,d_X)$, i.e., a maximal subset of $\Lambda_\e$, any two elements of which are separated in the metric $d_X$ by at least $\sqrt{\e}$. For any distinct $x,y\in \mathcal N_\e$ we have
\begin{multline}\label{eq:sqrt}
d_T\left(\widetilde f_\e(x),\widetilde f_\e(y)\right)\le \Lip_Y(f)\sqrt{\e}d_{G_\e}(x,y)\\\stackrel{\eqref{e.metricapprox}}{\le} \Lip_Y(f)\left(d_X(x,y)+C\sqrt{\e}\right)\le \Lip_Y(f)(1+C)d_X(x,y).
\end{multline}
It follows that we can extends $\left.\widetilde f\right|_{\mathcal N_\e}$ to a function $f_\e^*:X\to T$ that is Lipschitz with constant $\Lip_Y(f)(1+C)$ (this extension can be done in an arbitrary way, using the fact that $T$ is a $1$-absolute Lipschitz retract). Since the functions $f^*_\e$ are equicontinuous, the  Arzela-Ascoli Theorem~\cite[Thm.\ 6.1]{MR0464128} says that there exists a subsequence $\{\e_n\}_{n=1}^\infty\subseteq (0,1/4)$ tending to zero such that $f^*_{\e_n}$ converges uniformly to $f^*:X\to T$. We aim to show that $f^*$ is an AMLE of $f$.


By Proposition~\ref{p.treecomparison} it is enough to show that for each $t\in T$ and open $W\subseteq X\setminus Y$,  $z\in X\setminus W$, $b\geq 0$ and $c\in \mathbb R$,  we have the following:
\begin{multline}\label{eq:implication}
\forall x\in \partial W\quad  d_T\left(t, f^*(x)\right)\le bd_X(x,z)+c \\ \implies \ \forall x\in W\quad d_T\left(t, f^*(x)\right)\le bd_X(x,z)+c.\end{multline}
By uniform convergence, for every $\delta>0$ there exists $n_0\in \N$ such that for every $n\ge n_0$ if for every $x\in \partial W$ we have
\begin{equation}\label{eq:boundary assumption}
d_T\left(t, f^*(x)\right)\le bd_X(x,z)+c,
\end{equation}
then for every $x\in \partial W$ we have
\begin{equation}\label{eq:assumption for tree}
d_T\left(t, f^*_{\e_n}(x)\right)\le bd_X(x,z)+c+\delta.
\end{equation}

Assume from now on that~\eqref{eq:boundary assumption} holds for all $x\in \partial W$. Let $V_{\e_n}\subseteq \mathcal N_{\e_n}$ be the set of $u\in \mathcal N_{\e_n}\subseteq \Lambda_{\e_n}$ for which there exists $w\in  W$ such that $d_X(u,w)\le \sqrt{\e_n}$. Define $W_{\e_n}$ to be the open subset of the $1$-dimensional simplicial complex corresponding to the graph $G_{\e_n}$ consisting of the union of all the half-open intervals $[u,v)$, where $u,v\in \Lambda_{\e_n}$, $uv$ is an edge of $G_{\e_n}$ and $u\in  V_{\e_n}$. Any point $v\in \partial W_{\e_n}$ of the boundary of $W_{\e_n}$ in $G_{\e_n}$ is at $d_X$-distance greater than $\sqrt{\e_n}$ from $W$, but at $d_X$-distance at most $\sqrt{\e_n}$ from some  point of $\mathcal N_{\e_n}$ whose $d_X$-distance from $W$ is at most $\sqrt{\e_n}$. Thus
\begin{equation}\label{eq:close to boundary}
v\in \partial W_{\e_n}\implies d_X(u,\partial W)\le 2\sqrt{\e_n}.
\end{equation}
Let $z_{\e_n}$ be any one of the $d_X$-closest points of $z$ in $\mathcal N_{\e_n}$. By~\eqref{eq:lambda} and the definition of $\mathcal N_{\e_n}$ we have
\begin{equation}\label{eq:z close to boundary}
d_X(z_{\e_n},z)\le \sqrt{\e_n}+\e_n\le 2\sqrt{\e_n}.
\end{equation}
 Since $f^*_{\e_n}$ is Lipschitz with constant $\Lip_Y(f)(1+C)$, for every $v\in \partial W_{\e_n}$ we have
\begin{eqnarray}\label{perturb}
\nonumber d_T\left(t, f^*_{\e_n}(v)\right)&\stackrel{\eqref{eq:assumption for tree}\wedge \eqref{eq:close to boundary}}{\le}& bd_X(v,z)+c+\delta+2\Lip_Y(f)(1+C)\sqrt{\e_n}\nonumber\\&\stackrel{\eqref{eq:z close to boundary}}{\le}& bd_X\left(v,z_{\e_n}\right)+c+\delta+2(\Lip_Y(f)(1+C)+b)\sqrt{\e_n}\nonumber\\
&\stackrel{\eqref{e.metricapprox}}{\le}& b\sqrt{\e_n}d_{G_{\e_n}}\left(v,z_{\e_n}\right)+c+\delta+(2\Lip_Y(f)(1+C)+2b+bC)\sqrt{\e_n}\nonumber\\
&\le& b\sqrt{\e_n}d_{G_{\e_n}}\left(v,z_{\e_n}\right)+c+\delta+K\sqrt{\e_n},
\end{eqnarray}
where $K>0$ is independent of $n$. Observe that if  $z_{\e_n}\in W_{\e_n}$ then since $z\notin W$ we have $d_X(z_{\e_n},\partial W)\le 6\sqrt{\e_n}$. In this case the same argument as above shows that~\eqref{perturb} holds for $v=z_{\e_n}$ as well (with a different value of $K$). Thus, the bound~\eqref{perturb} holds for all $v\in \partial (W_{\e_n}\setminus\{z_{\e_n}\})$. By Theorem~\ref{thm:local-global} and Proposition~\ref{p.treecomparison}, it follows that for every $v\in W_{\e_n}\setminus\{z_{\e_n}\}$, and hence also for all $v\in V_{\e_n}$, we have
\begin{multline}\label{eq:from}
d_T\left(t, f^*_{\e_n}(v)\right)\le b\sqrt{\e_n}d_{G_{\e_n}}\left(v,z_{\e_n}\right)+c+\delta+K\sqrt{\e_n}\\\stackrel{\eqref{e.metricapprox}\wedge\eqref{eq:z close to boundary}}{\le} bd_{X}\left(v,z\right)+c+\delta+(K+Cb+2b)\sqrt{\e_n}.
\end{multline}
Since any point of $W$ is at $d_X$-distance at most $\e_n+\sqrt{\e_n}\le 2\sqrt{\e_n}$ from $V_{\e_n}$, and since $f_{\e_n}^*$ is Lipschitz with constant independent of $n$, we see from~\eqref{eq:from} that for some $K'>0$ independent of $n$, for all $x\in W$ we have:
\begin{equation}\label{eq:before limit}
d_T\left(t, f^*_{\e_n}(x)\right)\le bd_{X}\left(x,z\right)+c+\delta+K'\sqrt{\e_n}.
\end{equation}
Letting $n$ tend to $\infty$ in~\eqref{eq:before limit}, it follows that
\begin{equation}\label{eq:with delta}
d_T\left(t, f^*(x)\right)\le bd_{X}\left(x,z\right)+c+\delta.
\end{equation}
Since~\eqref{eq:with delta} holds for all $\delta>0$, we have proved the desired implication~\eqref{eq:implication}.

When $X$ is locally compact but not necessarily compact, the proof of Theorem~\ref{thm:exist ext} follows from a direct reduction to the compact case. Indeed, by Remark~\ref{r.boundaryw} it suffices to prove~\eqref{eq:implication} when $b>0$. In this case, since $T$ is bounded, the upper bounds in~\eqref{eq:implication} are trivial if $d_X(x,z)$ is sufficiently large. Thus, it suffices to prove~\eqref{eq:implication} for the intersection of $W$ with a large enough ball centered at $z$.
\end{proof}

\section{Politics} \label{politicalsection}

In this section we prove Proposition \ref{towvalue}.  We require
some notation (in particular, a definition of value) to make the
statement of Proposition \ref{towvalue} precise.

A \textbf{strategy} for a player is a way of choosing the player's next
move as a function of all previously played moves and all previous coin tosses.
It is a map from the set of partially played games to moves (or in the case of a
\textbf{random strategy}, a probability distribution on moves).
We might expect a good strategy to be Markovian, i.e., a map from the
current state to the next move, but it is useful to allow more general
strategies that take into account the history.

Given two strategies $\mathcal S_{\I}, \mathcal S_{\II}$, let
$\mathcal F(\mathcal S_{\I}, \mathcal S_{\II})$ be the expected total payoff (including the
running payoffs received) when the players adopt these strategies.
We define $\mathcal F$ to be some fixed constant $C$ if the game does not terminate with
probability one, or if this expectation does not exist.

The \textbf{value of the game for player~\I }is defined as
$\sup_{\mathcal S_\I} \inf_{\mathcal S_\II} \mathcal F(\mathcal
S_\I, \mathcal S_\II)$. The \textbf{value for player~\II }is
$\inf_{\mathcal S_{\II}}\sup_{\mathcal S_{\I}} \mathcal F(\mathcal
S_{\I},\mathcal S_{\II})$.  The game has a \textbf{value} when these
two quantities are equal.  It turns out that Politics always has a
value for any choice of initial states $x_0\in V$ and $t_0\in T$;
this is a consequence of a general theorem (since the payoff
function is a zero-sum Borel-measurable function of the infinite
sequence of moves \cite{MR1665779}; see also \cite{MR2032421} for
more on stochastic games).



\begin{proof}[Proof of Proposition~\ref{towvalue}] First we introduce some notation:
when the game position is at $x_k$, we let $y_k$ and $z_k$ denote
two of the vertices adjacent to $x_k$ that maximize
$d_T\left(\widetilde f(x_k),\widetilde f(\cdot)\right)$, chosen so
that $\widetilde f(x_k)$ is the midpoint of $\widetilde f(y_k)$ and
$\widetilde f(z_k)$. Write for $x\in V$,
$$
\delta(x) \eqdef \sup_{y\in N_G(x)} d_T\left(\widetilde
f(x),\widetilde f(y)\right), $$ and
$$
M_k \eqdef \delta(x_k) = d_T\left(\widetilde f(x_k),\widetilde
f(y_k)\right) = d_T\left(\widetilde f(x_k),\widetilde f(z_k)\right).
$$

Using this notation, we now give a strategy for player \II that
makes $d\left(\widetilde f(x_k), t_k\right)$ plus the total payoff
thus far for Player \I a {\em supermartingale}.
Player \II always chooses $t_k$ to be the element in
$\left\{\widetilde f(y_{k-1}),\widetilde f(z_{k-1})\right\}$ on which $d_T(\cdot, o_k)$ is largest; if she
wins the coin toss, she then chooses $x_k$ to be so that $\widetilde f(x_k)$ is that element.  To
establish the supermartingale property, we must show that,
regardless of player \I's strategy, we have
\begin{equation}\label{eq:super}
\mathbb E \Big[ d_T(o_k,t_k) - d_T(o_k, t_{k-1}) \Big] \geq \mathbb
E \left[ d_T \left(\widetilde f(x_k),t_k\right) -
d_T\left(\widetilde f(x_{k-1}), t_{k-1}\right) \right].
\end{equation}
 It is not hard to see that we have deterministically
\begin{equation}\label{eq:detrministically} d_T(o_k,t_k) - d_T(o_k,
t_{k-1}) \geq d_T\left(\widetilde f(x_{k-1}),t_k\right) -
d_T\left(\widetilde f(x_{k-1}), t_{k-1}\right).
\end{equation}
 Indeed, if $o_k$ and $t_{k-1}$ are in
distinct components of $T \setminus \left\{ \widetilde f(x_{k-1})
\right\}$, then the same will be true of $o_k$ and $t_k$,
and~\eqref{eq:detrministically} holds as equality; if $o_k$ and
$t_{k-1}$ are in the same component of $T \setminus \left\{ \widetilde f(x_{k-1})
\right\}$ then $o_k$ and $t_k$ will be in
opposite components of $T \setminus \left\{ \widetilde f(x_{k-1})
\right\}$, and the left hand side minus the right hand side of~\eqref{eq:detrministically} becomes twice the distance from $\widetilde f(x_{k-1})$ of the least common ancestor of $o_k$ and and $t_{k-1}$ in the tree rooted at $\widetilde f(x_{k-1})$.

Due to~\eqref{eq:detrministically}, in order to prove~\eqref{eq:super} it is
enough to show that
$$\mathbb E \left[ d_T\left(\widetilde f(x_k),t_k\right) - d_T\left(\widetilde f(x_{k-1}), t_k\right) \right] \leq 0,$$
which is clear since if player \II wins the coin toss this quantity will be $-M_k$
and if player \I wins the coin toss it will be at most $M_k$.

Next we give a very similar strategy for player \I that makes $d_T\left(\widetilde f(x_k), t_k\right)$
plus the total payoff thus far for Player \I a {\em submartingale}.
In this strategy, Player \I always chooses
$o_k$ to be the element in $\left\{\widetilde f(y_{k-1}),\widetilde f(z_{k-1})\right\}$ on which $d_T(\cdot, t_{k-1})$ is largest; if he wins the coin toss,
he then chooses $x_k$ to be so that $\widetilde f(x_k)$ is that element.  To establish the submartingale property, we must now show that
\begin{equation}\label{eq:for plugging}
\mathbb E \Big[ d_T(o_k,t_k) - d_T(o_k, t_{k-1}) \Big] \leq \mathbb E \left[ d_T\left(\widetilde f(x_k),t_k\right) - d_T\left(\widetilde f(x_{k-1}), t_{k-1}\right) \right].\end{equation}
Note that by strategy definition $o_k$ is on the opposite side of $\widetilde f(x_{k-1})$ from $t_{k-1}$, so we may write $d_T(o_k, t_{k-1}) =
M_k + d_T\left(\widetilde f(x_{k-1}), t_{k-1}\right)$.  Plugging this into~\eqref{eq:for plugging}, what we seek to show becomes
\begin{equation}\label{eq:last?}
\mathbb E \Big[ d_T(o_k,t_k) - M_k \Big] \leq \mathbb E \left[d_T\left(\widetilde f(x_k),t_k\right)\right],
\end{equation}
which we see by noting that the right hand side of~\eqref{eq:last?} is equal to $d_T(o_k, t_k)$ when player \I wins the coin toss
(and makes $\widetilde f(x_k) = o_k$) and at least $d_T(o_k,t_k) - 2M_k$ when player \II wins the coin toss, since
$d_T\left(\widetilde f(x_k),o_k\right) \leq 2M_k$ for any valid choice of $x_k$.

To conclude the proof, we need to modify the strategy in such a way that forces
the game to terminate without sacrificing the payoff expectation.  If both
players adopt the above strategy, it is clear that the increments
$d_T\left(\widetilde f(x_{k-1}), \widetilde f(x_k)\right)$ are non-decreasing, and that the distance from any
fixed endpoint of the tree has at least probability $1/2$ of increasing at each
step; from this, it follows that the length of game play is a random
variable with exponential decay.  If the other player makes other moves, which
are not optimal from the point of view of optimizing the payoff, then we can wait
until the cumulative amount the other player has ``given up'' is greater that
twice the diameter of $T$, and then force the game to end by placing a target at a single
point and subsequently always moving $x_k$ closer to that point when winning a coin toss.  (The
loss from the sub-optimality of this strategy is less than the gain from the amount the other player gave up.)
If a player adopts this strategy, then the total time duration of the game is a random variable whose law decays exponentially; this yields the uniform integrability necessary for the sub-martingale optional stopping
theorem, which implies Proposition~\ref{towvalue}.
\end{proof}



\bibliographystyle{abbrv}

\bibliography{treeAMLE}

\end {document}